\documentclass[a4paper, 12pt]{article}
\usepackage{latexsym}
\usepackage{amsmath}
\usepackage{amsthm}
\usepackage{amssymb}
\usepackage[all]{xy}

\setlength{\topmargin}{0in}
\setlength{\textheight}{9in}   
\setlength{\textwidth}{6in}    
\setlength{\oddsidemargin}{0.2in}    
\setlength{\evensidemargin}{0.2in}

\numberwithin{equation}{section}

\newtheorem{thm}[equation]{Theorem}
\newtheorem{prop}[equation]{Proposition}
\newtheorem{lem}[equation]{Lemma}
\newtheorem{cor}[equation]{Corollary}
\newtheorem{conj}[equation]{Conjecture}
\newtheorem{question}[equation]{Question}

\theoremstyle{definition}
\newtheorem{defn}[equation]{Definition}

\theoremstyle{remark}
\newtheorem{rem}[equation]{Remark}

\newcommand{\N}{\mathbb{N}}

\newcommand{\arc}{\rightarrow}

\newcommand{\Sym}{\mathrm{Sym}}
\newcommand{\Aut}{\mathrm{Aut}}
\newcommand{\Alt}{\mathrm{Alt}}

\begin{document}

\author{D.\ C.\ Lockett, \\School of Mathematics,\\ University of Leeds,\\ Leeds LS2 9JT, UK  
\and H.\ D.\ Macpherson (corresponding author),\\ School of Mathematics, \\University of Leeds,\\ Leeds LS2 9JT, UK, \\h.d.macpherson@leeds.ac.uk, \\tel.+441133435166, fax +441133435090}

\title{Orbit-equivalent infinite permutation groups}

\maketitle

\begin{abstract}
Let $G,H$ be closed permutation groups on an infinite set $X$, with $H$ a subgroup of $G$. It is shown that if $G$ and $H$ are {\em orbit-equivalent}, that is, have the same orbits on the collection of finite subsets of $X$, and $G$ is primitive but not 2-transitive, then $G=H$.
\end{abstract}

Keywords: primitive permutation group, orbit-equivalent, set-homogeneous.

\section{Introduction}

We consider closed permutation groups acting on an infinite set $X$; that is, subgroups of $\Sym(X)$ which are closed in $\Sym(X)$ in the topology of \emph{pointwise convergence} on $\Sym(X)$ with respect to the discrete topology on $X$ (so the basic open sets are cosets of pointwise stabilisers of finite sets). It is easily checked that a closed permutation group on $X$ is precisely the automorphism group of a relational structure with domain $X$.
Two permutation groups $G,H$ on the set $X$  are said  to be {\em orbit-equivalent} if, for every positive integer $k$, $G$ and $H$ have the same orbits on the collection  of unordered $k$-element subsets of $X$, denoted here by $X^{[k]}$. This generalises a definition for finite permutation groups. Observe that if $G,H$ are orbit-equivalent, then they are each orbit-equivalent to $\langle G,H\rangle$. Thus, to investigate such pairs, it suffices to consider $G,H$ with $H$ a subgroup of $G$. Easily, if $H\leq G$ and $G,H$ are orbit-equivalent, then $G$ is transitive (on $X$) if and only if $H$ is transitive, and also $G$ and $H$ preserve the same systems of imprimitivity on $X$; so $G$ is primitive on $X$ (that is, preserves no proper non-trivial equivalence relation on $X$) if and only if $H$ is primitive.

Our main theorem is the following. Our particular interest is in the case when $X$ is countably infinite, but the proofs below do not use countability.

\begin{thm}\label{mainthm}
Let $G,H$ be orbit-equivalent closed permutation groups on the infinite set
$X$, with $H\leq G$, and suppose that $G$ is primitive but not 2-transitive. Then $H=G$.
\end{thm}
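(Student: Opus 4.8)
First I would reduce the claim $H=G$ to the statement that $G$ and $H$ have the same orbits on finite ordered tuples of distinct elements of $X$. This suffices: granting it, if $g\in G$ and $A=\{a_1,\dots,a_n\}\subseteq X$, then the tuples $(ga_1,\dots,ga_n)$ and $(a_1,\dots,a_n)$ lie in a common $G$-orbit, hence in a common $H$-orbit, so some $h\in H$ agrees with $g$ on $A$; as $H$ is closed this gives $g\in H$, whence $H=G$. To work with tuples it is convenient to fix the canonical homogeneous structure $\mathcal M$ on $X$ whose $n$-ary relations ($n\ge 1$) are the $G$-orbits on $X^n$; then $\Aut(\mathcal M)=G$, the structure $\mathcal M$ is homogeneous, two finite tuples lie in the same $G$-orbit precisely when they satisfy the same relations, and two finite subsets lie in the same $G$-orbit precisely when the substructures they induce are isomorphic. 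In this language, orbit-equivalence of $H\le G$ with $G$ is exactly the statement $(\ast)$: whenever $D,D'\subseteq X$ are finite with $\mathcal M|_D\cong\mathcal M|_{D'}$, there is $h\in H$ with $h(D)=D'$; and our goal is to upgrade ``$h(D)=D'$'' to ``$h$ realises a prescribed isomorphism on tuples''.

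The plan is to deduce everything from the following \emph{Rigid Extension Property} (REP) of $\mathcal M$: \emph{every finite substructure of $\mathcal M$ embeds into a finite substructure with trivial automorphism group.} Granting REP, suppose $\bar a=(a_1,\dots,a_n)$ and $\bar b=(b_1,\dots,b_n)$ are tuples of distinct elements in the same $G$-orbit, say $g\bar a=\bar b$ with $g\in G$. Using REP (applied to $\mathcal M$ restricted to the entries of $\bar a$, realising the required extension inside $\mathcal M$ by homogeneity) choose a tuple $\bar c$, disjoint from $\bar a$ and with distinct entries, so that the set $D$ of entries of $\bar a\bar c$ induces a rigid substructure $\mathcal M|_D$. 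Put $D':=g(D)$, so $\mathcal M|_D\cong\mathcal M|_{D'}$ via $g$. By $(\ast)$ there is $h\in H$ with $h(D)=D'$. Now $g^{-1}h$ maps $D$ to itself and, lying in $G=\Aut(\mathcal M)$, preserves all relations, so it restricts to an automorphism of the rigid structure $\mathcal M|_D$ and is therefore the identity on $D$. Hence $h$ agrees with $g$ on $D$, and in particular $h\bar a=g\bar a=\bar b$. Thus $G$ and $H$ have the same orbits on tuples of distinct elements; since the orbit of an arbitrary tuple is determined by its equality pattern together with the orbit of its subtuple of distinct entries, they have the same orbits on all tuples, and $H=G$ by the reduction above.

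It remains to prove REP, and this is where primitivity and non-$2$-transitivity do their work; I expect it to be the main obstacle. The key first input is that primitivity forces the ``twin'' congruence to be trivial: the relation ``$x=y$, or $x$ and $y$ satisfy the same $2$-type with every third point'' is a $G$-invariant equivalence relation, hence (by primitivity) is either equality or universal; the universal case would make the structure of $\mathcal M$ on pairs trivial, contradicting the failure of $2$-transitivity. So any two distinct points of $X$ are separated by a third point, and more generally (by primitivity again) every non-trivial orbital graph of $G$, and its complement, is connected. The difficulty is that one cannot simply pin down points one at a time: separating \emph{all} pairs in a prescribed finite set may be impossible with finitely many new points, and automorphisms of a finite substructure need not fix any point. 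Instead I would argue by a well-founded symmetry reduction --- given a finite substructure with a non-trivial automorphism, enlarge it so as to shrink its automorphism group strictly, using the separation property and the connectivity of orbital graphs to propagate a broken symmetry, most likely splitting according to the shape of the non-trivial $2$-type structure (a graph in one case, a tournament in the other). The bookkeeping needed to keep this finite and to guarantee termination is, I expect, the technical heart of the proof; everything above it is soft. (Both hypotheses enter only through REP, and sharply: if $G$ is $2$-transitive then no substructure of size $\ge 2$ is rigid --- and $\Aut(\Q,<)<\Aut(\Q,\text{betweenness})$ is a genuine counterexample to the conclusion --- while imprimitive wreath products likewise violate both REP and the conclusion.)
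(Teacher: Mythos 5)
Your soft reduction is correct, but it is essentially the paper's Lemma~\ref{locallyrigid}: pass to the canonical structure, transfer set-orbits to tuple-orbits via a finite extension on which the relevant stabiliser acts trivially, and use closedness of $H$ to conclude $H=G$. (In fact your argument needs only the paper's weaker notion of \emph{local rigidity} --- a finite $V\supseteq U$ with $G_{\{V\}}\leq G_{(U)}$ --- not your REP, i.e.\ an extension with trivial automorphism group.) The genuine gap is that everything the theorem actually asserts lives in the step you defer: proving that a primitive, not 2-transitive closed group admits such rigid (or locally rigid) finite extensions. You acknowledge this is ``the main obstacle'' and offer only a heuristic (``well-founded symmetry reduction'', enlarging a finite substructure so that its automorphism group strictly shrinks), with no construction, no decreasing invariant, and no argument that the process stays finite or terminates. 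That is exactly the content of Sections 2 and 3 of the paper, and nothing in the sketch supplies it.

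Moreover, the first-step observations you do make are too weak to launch such an argument, and your target is miscalibrated. Triviality of your ``twin'' congruence gives only that any two points are separated by some third point; what is actually needed in the not 2-homogeneous case is a $G$-invariant graph $\Gamma$ with $\Gamma(x)\triangle\Gamma(y)$ \emph{infinite} for all distinct $x,y$. The relevant congruence is ``$|\Gamma_0(x)\triangle\Gamma_0(y)|$ finite'', and its universal case is not dispatched cheaply: the paper must show that then $\Gamma_0$ or its complement is locally finite and pass to the even-distance graph to recover infinite symmetric differences (Lemma~\ref{graph}). The rigidity proofs themselves --- mutual indiscernibility via Ramsey, classes engineered to have pairwise distinct sizes, the auxiliary order $x<y$ when $\Gamma(x)\supset_f\Gamma(y)$ (Lemma~\ref{orderedgraph}), and, in the 2-homogeneous not 2-transitive case, the invariant tournament and the partition into maximal good sets (Lemma~\ref{maxgood}, Proposition~\ref{cofrigid}) --- are wholly absent, and your plan does not even register the case split between these two situations. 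Note also that REP (cofinal rigidity of $\mathcal M$) is stronger than what the paper establishes in the non-2-homogeneous case, where only local rigidity of an expansion by $<$ is proved and the pure-graph statement is left as a conjecture; so as stated you aim at a property not known to hold, when the weaker local rigidity is all your soft argument requires. (Your parenthetical ``sharpness'' claim is also wrong: 2-transitivity only forces 2-element substructures to be non-rigid; 2-transitive groups such as $\Aut$ of the generic 3-hypergraph have many finite rigid substructures.)
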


We stress that if $H$ is a closed proper subgroup of $G\leq \Sym(X)$, then for some $k>0$, some $G$-orbit on $X^k$ (the set of $k$-tuples from $X$) breaks into more than one $H$-orbit. 
The assumption in the theorem that $G$ and $H$ are closed seems essential; indeed, any subgroup $H$ of $\Sym(X)$ is orbit-equivalent to its closure, and, for example, the dense (and so orbit-equivalent) subgroups of $\Sym(X)$ are exactly the subgroups of $\Sym(X)$ which are $k$-transitive for all positive integers $k$, and these seem hopelessly unclassifiable.

This paper takes its motivation from two sources. First, there is an extended literature on  primitive orbit-equivalent pairs of permutation groups
 on a {\em finite} set $X$; see for example \cite{siemons, inglis, siemwag}. Clearly, the symmetric and alternating groups $\Sym_n$ and $\Alt_n$, in their natural actions on $\{1,\ldots,n\}$, are orbit-equivalent for $n\geq 3$.
 Also, if $G$ is a permutation group on a finite set $X$ and has a regular orbit $U$ on the power set
${\cal P}(X)$, and $H$ is a proper subgroup of $G$, then $H$ is intransitive on $U$, and so $H$ is not orbit-equivalent to $G$. It is shown in \cite{cns} that if $X$ is finite then there are just finitely many primitive subgroups	of $\Sym(X)$ which do not contain $\Alt(X)$ and have no regular orbit on ${\cal P}(X)$ (and so {\em could} have an orbit-equivalent proper subgroup). Such primitive groups $G$ (with no regular orbit on $X$) are  classified by Seress in \cite{seress}, who then classifies all pairs of finite primitive orbit-equivalent permutation groups $(H,G)$ with $H<G$.	There is further work on the finite imprimitive case in 
\cite{seressyang}.																			

The second source of motivation is more model-theoretic, namely the study of homogeneous structures. Recall that a countable (possibly finite) structure $M$ in a first order relational language is said to be {\em homogeneous} if every isomorphism between finite substructures of $M$ extends to an automorphism of $M$. A natural generalisation, originally considered by Fra\"iss\'e in \cite{fra}, is to say that the countable structure $M$ is  	{\em set-homogeneous} if, whenever $U,V$ are isomorphic finite substructures of $M$, there is $g\in\Aut(M)$ with $U^g=V$. Finite set-homogeneous graphs are classified by Ronse in \cite{ronse}, and  a very short proof was given by Enomoto in \cite{en} that every finite set-homogeneous graph is homogeneous. There is a classification of set-homogeneous digraphs (allowing two vertices to be linked by an arc in each direction) in \cite{gmpr}, building on a corresponding classification of finite homogeneous digraphs by Lachlan \cite{lachlan}. Also, there are initial results on countably infinite set-homogeneous structures, in particular graphs and digraphs, in \cite{sethomg} and \cite{gmpr}. 	The latter paper  poses the following related question: given a homogeneous structure $M$, when does $\Aut(M)$ have a proper closed subgroup $H$ which acts set-homogeneously on $M$, that is, has the same orbits as $\Aut(M)$ on the collection of unordered finite subsets of $M$? Equivalently, for which $M$ does $\Aut(M)$ have a proper closed orbit-equivalent subgroup? (Here, and throughout the paper, we use the same symbol $M$ for a structure and for its domain.)

A countably infinite set $X$ in the empty language is homogeneous, and has automorphism group $\Sym(X)$. By a theorem of Cameron \cite{cam}, $\Sym(X)$ has just four orbit-equivalent closed proper subgroups, namely $\Aut(X,<)$, $\Aut(X,B)$, $\Aut(X,C)$, and $\Aut(X,S)$. Here $<$ is a  dense linear order without end points on $X$, $B$ is the (ternary) linear betweenness relation on $X$ induced from $<$, $C$ is the (also ternary) circular order on $X$ induced from $<$, and $S$ is the corresponding arity 4 separation relation. Observe that $\Aut(X,S)=\langle \Aut(X,B),\Aut(X,C)\rangle$ and is 3-transitive but not 4-transitive.	Our conjecture below would strengthen Theorem~\ref{mainthm} by removing the `not 2-transitive' assumption.

\begin{conj} \label{mainconj} Let $G$ and $H$ be distinct orbit-equivalent  primitive closed permutation groups on a countably infinite set $X$. Then $G$ and $H$ belong to the list $\Aut(X,<)$, $\Aut(X,B)$, $\Aut(X,C)$, $\Aut(X,S)$, $\Sym(X)$ described above.
\end{conj}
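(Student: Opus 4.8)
The plan is to argue by contradiction, reducing to a minimal ``level'' of disagreement. Suppose $H$ is a proper subgroup of $G$. Since $G$ and $H$ are closed with $H\le G$ and $H\ne G$, there is a least integer $n$ such that some orbit of $G$ on the set of $n$-tuples of distinct elements of $X$ fails to be an orbit of $H$; as $G$ and $H$ are both transitive, $n\ge 2$. Pick such a tuple $\bar a=(a_1,\dots,a_n)$ with underlying set $A$, and $g\in G$ with $\bar a^{g}$ outside the $H$-orbit of $\bar a$. Since $A$ and the underlying set of $\bar a^{g}$ lie in the same $G$-orbit on $X^{[n]}$, orbit-equivalence supplies $h\in H$ mapping $A$ onto that set; composing, one gets $g'\in G$ that stabilises $A$ setwise and induces on $A$ a permutation realised by no element of $H$ that stabilises $A$. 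Writing $G^{[B]}\le\Sym(B)$ for the group of permutations of a finite $B\subseteq X$ induced by the setwise stabiliser of $B$ in $G$, and $H^{[B]}$ likewise, the situation is: $H^{[A]}$ is a proper subgroup of $G^{[A]}$ with $|A|=n$, while $G^{[B]}=H^{[B]}$ for every finite $B$ with $|B|<n$ (a failure of this last equality would, running the same argument backwards, produce a disagreement below level $n$).

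Next I would extract the binary invariant structure forced by primitivity. Since $G$ is primitive but not $2$-transitive it has at least two orbits on ordered pairs of distinct elements, and a short case distinction shows that either (i) $G$ has at least two orbits on $2$-element subsets, so $G$ preserves a non-trivial edge-colouring of the complete graph on $X$ with each colour class a \emph{connected} graph (connectedness here is precisely primitivity), or (ii) $G$ is transitive on $2$-subsets and preserves a tournament. By orbit-equivalence $H$ has the same orbits on $2$-subsets, hence the same colour classes, resp.\ the same tournament. In case (ii), $G$ and $H$ each have exactly two orbits on ordered pairs and the $H$-orbits refine the $G$-orbits, so they coincide and $n\ge 3$. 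More importantly, orbit-equivalence yields the rigidity that is used throughout: the $G$-orbit of a finite subset is an $H$-orbit, hence consists of subsets all of the same isomorphism type with respect to \emph{every} $H$-invariant relation --- in particular with respect to any orientation of a colour class that $H$ (but possibly not $G$) preserves. Thus $G$ preserves the ``$H$-type'' of every finite subset, even where it does not preserve the relevant relations themselves.

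To rule out $n=2$ in case (i): a disagreement at level $2$ means that for some colour class $\Delta$ the setwise stabiliser in $G$ of a $\Delta$-edge reverses it while the stabiliser in $H$ does not; so $H$ preserves an orientation $P$ of $\Delta$ and acts arc-transitively on the digraph $(X,P)$. By connectedness and primitivity, $\Delta$ is neither a matching nor the two-way-infinite path, so every vertex has $\Delta$-degree at least $3$; hence, after possibly replacing $P$ by $P^{-1}$, every vertex has out-degree at least $2$ and in-degree at least $1$ in $(X,P)$. Using that some element of $G$ reverses a $\Delta$-edge, one transports a small configuration containing a directed $\Delta$-path $y\to v\to x$ to one in which the image of $v$ has two incoming $\Delta$-edges, changing its $H$-type and contradicting the rigidity above. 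The one configuration needing separate treatment is when \emph{every} element of $G$ sends $P$ to $P$ or to $P^{-1}$; then $G^{+}:=G\cap\Aut(X,P)$ is a closed, primitive, non-$2$-transitive subgroup of index $2$ in $G$ which still contains $H$ and is still orbit-equivalent to $H$, and one argues with $G^{+}$ in place of $G$, or plays the $P$-reversing coset of $G^{+}$ directly against the rigidity. So $n\ge 3$ in every case.

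This leaves $n\ge 3$, which I expect to be the main obstacle; here the aim is to show directly that $G^{[A]}\le H^{[A]}$. Given $\tau\in G^{[A]}$, realised by $g\in G$ stabilising $A$: if $\tau$ stabilises a proper subset $B\subsetneq A$, then $g|_{B}\in G^{[B]}=H^{[B]}$, so $g|_{B}$ is induced by some $h\in H$ stabilising $B$, and a short manipulation gives $\tau\in H^{[A]}$ as soon as $g|_{B}$ can be realised by an $h\in H$ that also fixes $A\setminus B$ pointwise --- equivalently, as soon as $A\setminus B$ has the same orbit under the pointwise stabiliser $H_{(B)}$ as under $G_{(B)}$; similarly, any level-$2$ swap inside $A$ realised by $G$ is, by minimality of $n$, already realised by $H$ on the relevant pair. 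The difficulty is that one knows these facts only \emph{globally} at the level of sets (orbit-equivalence) and at the level of proper subsets ($G^{[B]}=H^{[B]}$), and one must promote them to elements of $H$ with prescribed behaviour on $A$. Breaking this circularity is exactly where ``primitive but not $2$-transitive'' does its work: the connected invariant binary structure it provides is rigid enough to force the required coincidences of orbits, but turning this into a proof will need separate treatments of the tournament case, the case where $G$ preserves a non-orientable colour graph, and the case where $G$ merely orients a colour class more coarsely than $H$, together with careful bookkeeping of how finite configurations embed in the invariant structure.
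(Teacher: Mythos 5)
The statement you have attempted to prove is stated in the paper as a \emph{conjecture} (Conjecture~\ref{mainconj}), not a theorem: the authors explicitly say they cannot prove it even under strong extra hypotheses (e.g.\ $G$ locally compact, or $G$ countable), and the paper only proves the special case in which $G$ is primitive but \emph{not} 2-transitive (Theorem~\ref{mainthm}), where the conclusion is the stronger statement $H=G$. Your proposal does not close the actual gap, because at the decisive point you write ``Since $G$ is primitive but not $2$-transitive\dots'' --- but the conjecture deliberately drops that hypothesis; its entire content beyond Theorem~\ref{mainthm} is the case where $G$ \emph{is} 2-transitive (indeed possibly $k$-transitive for several $k$) but not highly homogeneous, e.g.\ $G$ 2-primitive but not 3-homogeneous, where the invariant structure is a 3-hypergraph rather than a graph or tournament and no invariant binary relation is available. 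Your argument never touches this case, and it also never explains why the exceptional pairs are exactly the five groups on Cameron's list $\Aut(X,<)$, $\Aut(X,B)$, $\Aut(X,C)$, $\Aut(X,S)$, $\Sym(X)$ (this needs Cameron's theorem on closed groups orbit-equivalent to $\Sym(X)$, or an equivalent). Note, moreover, that your intended conclusion $H=G$ is false for the pairs the conjecture permits, so the minimal-counterexample scheme you set up cannot simply be extended to the 2-transitive case.

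Even read as an alternative proof of the non-2-transitive case (Theorem~\ref{mainthm}), the proposal is incomplete at precisely the point you flag yourself: promoting the set-level agreement of orbits (orbit-equivalence) and the level-$<n$ agreement on tuples to an element of $H$ with prescribed action on the finite set $A$ is the whole difficulty, and for $n\ge 3$ you offer only a plan, with the circularity unresolved. The paper breaks this circularity by a different mechanism: it proves that a primitive, non-2-transitive closed group acts \emph{locally rigidly} --- in the non-2-homogeneous case via a $G$-invariant graph $\Gamma$ with all symmetric differences $\Gamma(x)\triangle\Gamma(y)$ infinite together with the quasi-order $x<y$ when $\Gamma(x)\supset_f\Gamma(y)$ (Lemmas~\ref{graph} and~\ref{orderedgraph}), and in the 2-homogeneous non-2-transitive case via a cofinally rigid invariant tournament (Proposition~\ref{cofrigid}, Corollary~\ref{coro}) --- and then Lemma~\ref{locallyrigid} converts orbit-equivalence on finite \emph{sets} into equality of the closed groups in one stroke, with no induction on levels. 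To make your route work you would either need to import that local-rigidity machinery (at which point your reduction becomes redundant) or supply a genuinely new argument for the promotion step; and for the conjecture itself you would additionally need a treatment of the 2-transitive case, which the authors themselves describe as requiring arguments considerably more involved than those of the paper.
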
																					Recall the following standard terminology, for a permutation group $G$ on  a set $X$, and an integer $k>0$:
$G$ is \emph{$k$-transitive} if it is transitive on the ordered $k$-subsets of $X$;
and $G$ is \emph{$k$-homogeneous} if it is transitive on the unordered $k$-subsets of $X$.
 Also, if $U$ is a subset of $X$ then $G_{\{U\}}$ and $G_{(U)}$ denote respectively the setwise and pointwise stabilisers of $U$ in $G$, and if $x\in X$ then
$G_x:=\{g\in G:x^g=x\}$.

The proof of Theorem~\ref{mainthm} splits into two cases: 

(1) $G$ is primitive but not $2$-homogeneous; 

(2) $G$ is $2$-homogeneous (and so primitive) but is not $2$-transitive.

Our main tool for both cases is the notion of \emph{local rigidity}.
We shall say that a permutation group $G$ acting on an infinite set $X$ acts \emph{locally rigidly} if for all finite $U \subset X$, there is some finite $V \subset X$ such that $U \subseteq V$ and the setwise stabiliser $G_{\{V\}}$ of $V$ fixes $U$ pointwise. Likewise, a first order relational structure $M$ is {\em locally rigid} if, for every finite substructure $U$ of $M$, there is a finite substructure $V$ of $M$ containing $U$ such that every automorphism of $V$ fixes $U$ pointwise. Clearly, if a relational structure $M$ is locally rigid, then any subgroup of  its automorphism group acts locally rigidly on $M$. Strengthening the notion of local rigidity, we shall later say that a countably infinite first order structure $M$ is {\em cofinally rigid} if, for every finite substructure $U$ of $M$, there is a finite substructure $V$ of $M$ with $U\subseteq V$ such that the automorphism group of  $V$ is trivial. Here, `substructure' is used in the standard model-theoretic sense, corresponding to the graph-theoretic notion of `induced subgraph'.

\begin{lem} \label{locallyrigid}
Let $G,H$ be closed permutation groups on $X$, with $H \le G$. If $G$ and $H$ are orbit-equivalent and $G$ acts locally rigidly, then $H=G$.
\end{lem}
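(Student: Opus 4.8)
The plan is to use the fact that a closed permutation group is determined by its orbits on \emph{ordered} tuples, and to combine the local rigidity of $G$ with orbit-equivalence (which a priori only gives information about orbits on \emph{unordered} sets) to obtain the tuple-orbit information needed to conclude $H=G$.

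First I would recall what closure means in the topology of pointwise convergence: since $H$ is closed, an element $g\in\Sym(X)$ lies in $H$ precisely when, for every finite $U\subseteq X$, there is some $h\in H$ with $u^h=u^g$ for all $u\in U$. As $H\le G$, it therefore suffices to show that every $g\in G$ lies in the closure of $H$, i.e.\ in $H$ itself; equivalently, given $g\in G$ and a finite set $U=\{a_1,\dots,a_k\}\subseteq X$, I must produce $h\in H$ that agrees with $g$ on $U$.

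The main step is then as follows. Apply the local rigidity of $G$ to the finite set $U$ to obtain a finite set $V$ with $U\subseteq V$ such that $G_{\{V\}}$ fixes $U$ pointwise. Since $g\in G$, the sets $V$ and $V^g$ lie in the same $G$-orbit on $X^{[|V|]}$, so by orbit-equivalence they lie in the same $H$-orbit; choose $h\in H$ with $V^h=V^g$. Then $gh^{-1}\in G$ and $V^{gh^{-1}}=(V^g)^{h^{-1}}=(V^h)^{h^{-1}}=V$, so $gh^{-1}\in G_{\{V\}}$ and hence fixes $U$ pointwise. Thus $a_i^{gh^{-1}}=a_i$, that is $a_i^h=a_i^g$, for each $i$, so $h$ agrees with $g$ on $U$, as required. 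Carrying this out for every finite $U$ and every $g\in G$ gives $G\subseteq\overline{H}=H$, and hence $G=H$.

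I do not expect a genuine obstacle here; the statement is essentially an unwinding of the definitions. The one point requiring care is that orbit-equivalence only controls orbits on \emph{unordered} finite sets, so one cannot hope directly to match $g$ on $V$ as an ordered tuple by an element of $H$; instead one matches $V^g$ as a \emph{set} and then lets local rigidity collapse the resulting ambiguity (the coset element in $G_{\{V\}}$) down to the identity on $U$. This is exactly the role of the hypothesis that $G$ acts locally rigidly.
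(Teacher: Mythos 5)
Your proof is correct and is essentially the paper's argument: the key step in both is to apply local rigidity to $U$ to get $V$, use orbit-equivalence to find $h\in H$ with $V^h=V^g$, and observe $gh^{-1}\in G_{\{V\}}\le G_{(U)}$. The only cosmetic difference is that you phrase the conclusion via $g$ lying in the closure of $H$, while the paper phrases it as $H$ having the same orbits as $G$ on ordered tuples (which rests on the same closure fact).
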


\begin{proof}
It suffices to show that $H$ has the same orbits as $G$ on $X^k$ for all $k$. 
So let $\overline{u}_1, \overline{u}_2 \in X^k$ be in the same orbit of $G$; that is, there is $g \in G$ such that $\overline{u}_1^g = \overline{u}_2$.
Let $U_1, U_2 \subset X$ be enumerated by $\overline{u}_1, \overline{u}_2$ respectively.
Since $G$ acts locally rigidly on $X$, there is finite $V_1 \subset X$ such that $U_1 \subseteq V_1$ and $G_{\{V_1\}} \le G_{(U_1)}$.
Let $V_2 := V_1^g$. 
Then $V_1, V_2$ are in the same orbit of $G$, so by orbit-equivalence there is some $h \in H$ such that $V_1^h = V_2$. 
Now $gh^{-1} \in G_{\{V_1\}}$, so in fact $gh^{-1} \in G_{(U_1)}$.
Thus $\overline{u}_1^h = \overline{u}_2$ as required.
\end{proof}

In both cases (1) and (2) ($G$ primitive, and either not $2$-homogeneous, or $2$-homogeneous but not $2$-transitive) we shall show that $G$ acts locally rigidly on $X$. In fact, in the second case we show that $G$ is a group of automorphisms of a {\em cofinally rigid} tournament. 
Our method to show the local rigidity of such actions stems from a similar result in \cite{sethomg}, which we adapt. 
Formally, we view a graph $\Gamma$ as a relational structure $\Gamma=(X,R)$, where $R$ is a symmetric irreflexive binary relation on $X$. 
Given  a graph $\Gamma$, if $x,y$ are vertices we write $x\sim y$ if $x$ and $y$ are adjacent, and  let $\Gamma(x):= \{ v \in X: v \sim x \}$, the \emph{neighbour set} of $x$. We shall prove in Lemma~\ref{orderedgraph} a strengthening of the following result.

\begin{lem}\cite{sethomg} \label{prop6.1}
Let $\Gamma$ be an infinite graph such that, for all distinct vertices $x,y$ of $\Gamma$, the sets  $\Gamma(x) \setminus \Gamma(y)$ and $\Gamma(y) \setminus \Gamma(x)$ are both infinite. Then $\Gamma$ is locally rigid. 
\end{lem}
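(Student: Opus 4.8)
The plan is to show that such a graph $\Gamma$ is locally rigid by induction on the size of the finite subset $U$ of $X$ that we wish to pin down. Given finite $U\subset X$, it suffices to produce, for each pair of distinct $x,y\in U$, a vertex $w$ that is adjacent to exactly one of $x,y$; adding all such witnesses to $U$ gives a finite set $V_0$ with the property that $\Gamma_{\{V_0\}}$ already fixes $U$ setwise-orbitwise in the sense that no automorphism of $V_0$ can swap two points of $U$ without violating an adjacency to some witness. But this is not quite enough: an automorphism of $V_0$ could still move a point of $U$ to another point of $U$ if it simultaneously permutes the witnesses. So the real plan is iterative: starting from $U$, repeatedly enlarge the current finite set $V$ by throwing in, for every ordered pair $(a,b)$ of distinct vertices currently in $V$, a vertex $w(a,b)\in\Gamma(a)\setminus\Gamma(b)$ (which exists and can be chosen outside the current $V$, since $\Gamma(a)\setminus\Gamma(b)$ is infinite). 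Do this for a carefully controlled number of rounds.

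The key point, following the method of Lemma~\ref{prop6.1} as in \cite{sethomg}, is the following separation property: if $V$ is obtained from $U$ by one round of adding witnesses $w(a,b)$ for all ordered pairs from $U$, then for any $g\in\Gamma_{\{V\}}$ and any $a\in U$, the image $a^g$ is again in $U$ and moreover $g$ restricted to $U$ is forced to be an automorphism of the ``separation structure'' on $U$. Iterating, after finitely many rounds $V_1\subseteq V_2\subseteq\cdots$, any automorphism $g$ of $V_n$ must fix $U$ pointwise, because at each level the witnesses added at the previous level constrain the possible permutations of $U$, and since $U$ is finite this descending chain of possible symmetry groups stabilises at the identity after at most $|U|!$ steps (or more carefully, after boundedly many steps, each step strictly shrinking the group of permutations of $U$ realisable by automorphisms of the current $V_i$ that extend to the next level). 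Concretely: let $K_i\le\Sym(U)$ be the group of permutations of $U$ induced by $\Gamma_{\{V_i\}}$; the witnesses in $V_{i+1}$ ensure $K_{i+1}$ cannot move a point of $U$ unless it also correctly permutes those witnesses, and one shows $K_{i+1}\lneq K_i$ unless $K_i=1$, giving $K_N=1$ for some finite $N$ and hence $\Gamma_{\{V_N\}}\le\Gamma_{(U)}$.

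The main obstacle is the bookkeeping in the inductive step: one must verify that adding witnesses for pairs does genuinely shrink the realisable symmetry group, rather than merely permuting the witnesses harmlessly along with the points they witness. The resolution is to add witnesses not just for pairs in $U$ but for all pairs in the previously constructed set $V_i$, so that the witnesses themselves acquire their own witnesses and cannot be freely permuted; a short argument then shows that if $g\in\Gamma_{\{V_{i+1}\}}$ fixes $V_i$ setwise but moves some $a\in U$ to $b\ne a$, then $g$ must send $w(a,b)$ — a neighbour of $a$ but not of $b$ — to a neighbour of $b$, contradiction once $w(a,b)$ is forced to stay inside an appropriate subset. Since all the sets $\Gamma(a)\setminus\Gamma(b)$ are infinite, there is always enough room to choose the witnesses distinct and outside the current finite set, so the construction never stalls. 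I would present this as the proof of the stronger Lemma~\ref{orderedgraph} and derive Lemma~\ref{prop6.1} as the special case, but the argument above is exactly what is needed for the statement as given.
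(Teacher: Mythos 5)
Your plan has a genuine gap at exactly the point you identify as ``the main obstacle'', and the resolution you sketch does not close it. The statement to be proved is that every automorphism of the \emph{finite induced subgraph} on $V$ fixes $U$ pointwise, and such an automorphism has no reason whatsoever to respect your layered construction: it need not fix $V_i$ setwise, nor $U$ setwise. Your ``short argument'' begins ``if $g$ fixes $V_i$ setwise but moves some $a\in U$ to $b$'', which assumes away the whole difficulty; without that assumption your groups $K_i$ are not even well defined as subgroups of $\Sym(U)$ (an automorphism of $V_i$ need not induce any permutation of $U$), the containment $K_{i+1}\le K_i$ has no justification (automorphisms of $V_{i+1}$ do not restrict to automorphisms of $V_i$), and the claimed strict descent $K_{i+1}\lneq K_i$ is precisely the assertion that witnesses cannot be ``permuted harmlessly along with the points they witness'' --- which is the thing to be proved, not a bookkeeping detail. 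Adding witnesses for all pairs of $V_i$ rather than just of $U$ does not by itself force $w(a,b)$ ``to stay inside an appropriate subset'': you impose no control on how the new witnesses attach to the rest of the finite set, so nothing prevents an automorphism of $V_{i+1}$ from carrying $a$ to $b$ while simultaneously shuffling the witness vertices, and no termination argument is available.

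The paper's proof of the stronger Lemma~\ref{orderedgraph} supplies the missing mechanism, and it is structural rather than iterative. The separating sets are taken inside the infinite sets $\Gamma(u_i)\setminus\Gamma(u_j)$ but are then pruned by Ramsey's theorem (Lemma~\ref{ramsey2}) to be mutually indiscernible over $U$, so each added block is internally complete or null and uniformly joined to everything else; one then works with the equivalence relation $\approx_W$ (``no third point of $W$ separates the pair'') and arranges that distinct non-singleton classes have pairwise distinct sizes, which forces \emph{every} automorphism of the finite graph to fix each such class setwise --- this is what pins the witnesses down without any assumption that the automorphism respects the construction. Finally, the residual problem that a point $u\in U$ might fall into the same class as one of its witness blocks is handled by a second round of ``huge'' separating sets $S_{cu}$, again made indiscernible and of pairwise distinct sizes, followed by a composition argument. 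Your proposal contains none of these ingredients (no indiscernibility, no size trick, no second round), so as it stands it does not yield the lemma.
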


We draw attention to a basic Ramsey-theoretic principle which is well-known, for example in model theory, and used below in both the primitive not 2-homogeneous case, and the 2-homogeneous not 2-transitive case. 

\begin{defn} \label{ramsey1}
Let $L$ be a finite relational language, let $M$ be a first order $L$-structure, $A$ a finite subset of the domain of $M$, and $P_1,\ldots,P_r$ 
disjoint subsets of $M\setminus A$, with $P_i:=\{p_{i,0},\ldots,p_{i,n-1}\}$ for each $i=1,\ldots,r$. We say that $P_1\ldots,P_r$ are {\em mutually indiscernible over $A$}
if the following holds for any positive integers $e_1,\ldots, e_r<n$: for each $j=1,\ldots,r$, let $\bar{p}_j,\bar{p}_j'$ be $e_j$-tuples from $P_j$, each listed in increasing order (so if $\bar{p}_j=(p_{j,i(1)},\ldots,p_{j,i(e_j)})$, then 
$i(1)<\ldots<i(e_j)$); then the map taking $\bar{p}_j$ to $\bar{p}_j'$ for each $j$, extended by the identity on $A$, is an isomorphism of $L$-structures. 
\end{defn}

\begin{lem} \label{ramsey2}
Let $M,L,A$ be as in Definition~\ref{ramsey1} with $M$ infinite, and let 
$Q_1,\ldots,Q_r$ be countably infinite disjoint subsets of $M\setminus A$. Let $n$ be a positive integer.  Then the following hold.

(i) There are subsets $P_1\subset Q_1,\ldots,P_r\subset Q_r$, each of size $n$, such that $P_1,\ldots,P_r$ are mutually indiscernible over $A$ (with respect to some indexing of each $P_i$).

(ii) If every relation of $L$ is of arity at most 2, and $P_1,\ldots,P_r$ are as in (i), then
for each $i=1,\ldots,r$, either some relation of $L$ induces a total order on $P_i$, or every permutation of $P_i$, extended by the identity on $S_i:=A \cup \bigcup_{j\neq i}P_j$, is an automorphism of the induced $L$-structure on $S:=A \cup P_1 \cup \ldots \cup P_r$.
\end{lem}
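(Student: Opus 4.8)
The plan is to prove both parts by an iterated application of the infinite Ramsey theorem. For part (i), fix an enumeration $q_{i,0},q_{i,1},\dots$ of each $Q_i$. A ``pattern type'' of a configuration consists of the choice, for each $j$, of an $e_j$-subset of the index set, together with the isomorphism type (over $A$) of the induced $L$-structure on $A$ together with the corresponding elements. Since $L$ is finite and the total size $e_1+\dots+e_r$ is bounded (we only need configurations with $e_j\le n$), there are only finitely many such isomorphism types. First I would handle a single $Q_i$ in isolation: by repeatedly colouring increasing $m$-tuples from $Q_i$ by their isomorphism type over $A$ (for each $m\le n$ in turn, passing to an infinite monochromatic subset each time, or more efficiently colouring $n$-tuples by the full ``$n$-type over $A$'' which records all sub-isomorphism-types), extract an infinite $Q_i'\subseteq Q_i$ on which any two increasing $e_j$-tuples are isomorphic over $A$. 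Then I would iterate over $i=1,\dots,r$, but colouring tuples drawn from all the $Q_i'$ simultaneously: colour each increasing configuration $(\bar q_1,\dots,\bar q_r)$ (with $\bar q_j$ an increasing $e_j$-tuple from the current version of $Q_j$) by the isomorphism type over $A$ of the induced structure on $A\cup\bigcup_j \mathrm{range}(\bar q_j)$; applying the product/iterated form of Ramsey's theorem yields infinite subsets on which this colour depends only on the tuple of index-sets chosen, and hence (enumerating the first $n$ elements of each) gives the desired $P_1,\dots,P_r$ of size $n$ that are mutually indiscernible over $A$. The main technical point is simply bookkeeping: each step of Ramsey is applied to one ``coordinate'' while the others are held as parameters, and finiteness of $L$ keeps the colour set finite throughout.

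For part (ii), assume all relations of $L$ have arity $\le 2$ and fix mutually indiscernible $P_1,\dots,P_r$ of size $n$ as in (i). Fix $i$ and consider the induced structure on $P_i$ itself: by mutual indiscernibility, for any two increasing pairs $p_{i,a},p_{i,b}$ (with $a<b$) and $p_{i,c},p_{i,d}$ (with $c<d$) the map swapping them is an isomorphism, so the ``forward'' edge-pattern between two elements of $P_i$ depends only on their relative order. Thus for each binary relation symbol $R$ of $L$, whether $R(p_{i,a},p_{i,b})$ holds for $a<b$ is a fixed truth value $\epsilon_R^{>}$, and similarly $R(p_{i,b},p_{i,a})$ has a fixed truth value $\epsilon_R^{<}$; also the unary relations (arity $1$) and the truth value of $R(p_{i,a},p_{i,a})$ are constant on $P_i$. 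There are now two cases. If for every binary $R$ we have $\epsilon_R^{>}=\epsilon_R^{<}$, then the induced structure on $P_i$ is preserved by every permutation of $P_i$; combined with mutual indiscernibility — which says the structure between $P_i$ and $S_i$, and within $S_i$, does not ``see'' the order on $P_i$ — one checks that any permutation of $P_i$ extended by the identity on $S_i$ is an automorphism of $S$. (Here one uses that all relations are binary, so a relation instance involves at most one element of $P_i$ together with at most one other element; the $P_i$-to-$S_i$ instances are controlled by indiscernibility of $P_i$ over $A\cup\bigcup_{j\ne i}P_j$, which is part of what mutual indiscernibility over $A$ gives once we note $S_i$ is built from $A$ and the other $P_j$.) Otherwise, some binary $R$ has $\epsilon_R^{>}\ne\epsilon_R^{<}$; then exactly one of $R(p_{i,a},p_{i,b})$, $R(p_{i,b},p_{i,a})$ holds for each $a<b$, and the reflexive/irreflexive value is constant, so $R$ (or its complement, or a Boolean combination making it a strict order) defines a relation on $P_i$ which is total and antisymmetric; indiscernibility forces it to be transitive on increasing triples, hence a total order on $P_i$.

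The step I expect to be the main obstacle is the careful verification in part (ii) that ``every binary relation between $P_i$ and the rest is order-blind'' really does follow from the definition of mutual indiscernibility over $A$ — the definition as stated compares increasing tuples from each $P_j$ against increasing tuples from the same $P_j$, so to move a single element of $P_i$ past another I must realise the transposition as such an increasing-to-increasing isomorphism (choosing $e_i=1$ and invoking the case $e_i=1$ of the definition for the ``between'' relations, and $e_i=2$ for the ``within $P_i$'' relations), and then argue that an arbitrary permutation of $P_i$ is a composition of such moves. This is routine but must be done with care because the definition privileges the chosen indexing; the arity-$\le 2$ hypothesis is exactly what makes it go through, since no single relation instance can involve two distinct elements of $P_i$ together with anything else.
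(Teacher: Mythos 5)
Your part (ii) is fine (indeed more detailed than the paper, which dismisses it as immediate from (i)), and the subtlety you flag there --- realising a transposition of $P_i$ via the definition with $e_i\in\{1,2\}$ while the other rows are held fixed, using arity $\le 2$ --- is real but routine. The genuine gap is in part (i), at exactly the step you pass over: the ``product/iterated form of Ramsey's theorem'' that is supposed to yield \emph{infinite} subsets on which the colour of a mixed configuration is determined does not exist. The infinite polarized partition relation fails: take $L=\{R\}$ binary, $A=\emptyset$, and a structure in which $R(q_{1,i},q_{2,j})$ holds iff $i<j$ (and no other instances). Then for \emph{any} infinite $P_1\subseteq Q_1$, $P_2\subseteq Q_2$ the type of a pair $(p_1,p_2)\in P_1\times P_2$ is not constant, so no infinite extraction of the kind you describe is possible; the lemma is only true because the $P_i$ are finite. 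Nor does the fallback you sketch (``each step of Ramsey applied to one coordinate, the others held as parameters'') repair this: shrinking $Q_i$ over $A$ together with the \emph{previously chosen finite} sets never controls the relations between $Q_i$ and the rows chosen later (e.g.\ if $R(q_{1,i},q_{2,j})$ holds iff $i$ is even, any $P_1$ chosen first looks indiscernible over $A$, yet no subsequent choice of $P_2$ gives mutual indiscernibility), and indiscernibility over an \emph{infinite} parameter set is not something Ramsey's theorem can deliver.

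What the paper does instead, and what your write-up is missing, is a single application of Ramsey on a \emph{common} index set together with a block-placement trick. Enumerate $Q_i=\{q_{i,j}:j\in\mathbb{N}\}$, let $d$ be the maximal arity of $L$, and colour each $d$-subset $\{i_1<\dots<i_d\}$ of $\mathbb{N}$ by the isomorphism type over $A$ of the \emph{aligned} configuration $(q_{1,i_1},\dots,q_{1,i_d},\dots,q_{r,i_1},\dots,q_{r,i_d})$; pass to an infinite monochromatic set of indices, and then take the $P_i$ to occupy pairwise \emph{disjoint blocks} of these indices ($p_{i,j}:=q_{i,(i-1)n+j}$). Because every relation instance involves at most $d$ elements and the blocks are disjoint, any comparison required by mutual indiscernibility can be extended to an order-preserving map between two $d$-subsets of the common index set, where monochromaticity applies; the disjointness of the blocks is precisely what substitutes for the false infinite product statement. (Equivalently one could quote the \emph{finite-target} product Ramsey theorem and then fix positions when restricting from $n$-subsets to $e_j$-tuples, but some such device is needed.) So the main obstacle is not the bookkeeping in (ii) that you anticipated, but the extraction in (i), and as written your proof of (i) rests on a false intermediate claim.
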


\begin{proof} (Sketch) 
(i) Let $Q_i:=\{q_{i,j}:j\in {\mathbb N}\}$ for each $i=1,\ldots,r$.
Let $d$ be the maximum arity of a relation in $L$. Colour each subset
$\{i_1,\ldots,i_d\}$ of ${\mathbb N}$  in such a way that given natural numbers $i_1<\ldots<i_d$ and $k_1<\ldots<k_d$,
 the map
$$(q_{1,i_1},\ldots,q_{1,i_d},\ldots,q_{r,i_1},\ldots,q_{r,i_d})\mapsto
(q_{1,k_1},\ldots,q_{1,k_d},\ldots,q_{r,k_1},\ldots,q_{r,k_d})$$
is an isomorphism over $A$ if and only if
$\{i_1,\ldots,i_d\}$ and $\{k_1,\ldots,k_d\}$ have the same colour. By Ramsey's Theorem, replacing ${\mathbb N}$ by an infinite monochromatic subset if necessary, we may suppose that ${\mathbb N}$ is monochromatic. Now let $p_{i,j}:=q_{i, (i-1)n+j}$ for each $i=1,\ldots,r$ and $j=0,\ldots,n-1$. Put $P_i:=\{p_{i,1},\ldots,p_{i,n-1}\}$ for each $i=1,\ldots,r$. Then $P_1,\ldots,P_r$ are mutually indiscernible over $A$.

(ii) This is immediate from (i).

\end{proof}

The case of Theorem~\ref{mainthm} when $G$ is primitive but not 2-homogeneous is handled in Section 2, and the 2-homogeneous but not 2-transitive case is treated in Section 3. Section 4 consists of some further observations, about bounds in local rigidity, approaches to
 Conjecture~\ref{mainconj}, and regular orbits on the power set. We also observe that our proofs give a slight strengthening of Theorem~\ref{mainthm}, namely Theorem~\ref{stronger}.

\section{$G$ primitive but not $2$-homogeneous}

In this section we prove the following.

\begin{prop}\label{not2hom}
Let $G$ be a primitive but not 2-homogeneous permutation group on an infinite set $X$. Then the action of $G$ on $X$ is locally rigid.
\end{prop}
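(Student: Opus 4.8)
The plan is to produce, out of the hypotheses on $G$, a $G$-invariant graph on $X$ whose automorphism group acts locally rigidly on $X$; since $G$ lies inside that automorphism group, and since any subgroup of a group acting locally rigidly itself acts locally rigidly, this suffices (and, with Lemma~\ref{locallyrigid}, yields Theorem~\ref{mainthm} in case (1)). Because $G$ is not $2$-homogeneous, some $G$-orbit $\mathcal{O}$ on unordered pairs is a proper subset of $X^{[2]}$; put $\Gamma = (X,R)$ with $R$ the symmetric (irreflexive) relation $\{(x,y):\{x,y\}\in\mathcal{O}\}$. Then $\Gamma$ is a nontrivial $G$-invariant graph --- it has both an edge and a non-edge --- and since $G\le\Aut(\Gamma)$ is primitive and transitive, the partition of $X$ into connected components of $\Gamma$, and likewise that of its complement $\overline{\Gamma}$, are $G$-invariant equivalence relations; as neither is permitted to be proper and nontrivial, both $\Gamma$ and $\overline{\Gamma}$ are connected.

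The argument then runs by a dichotomy on the $G$-invariant equivalence relation $x\approx y$ defined by ``$\Gamma(x)\triangle\Gamma(y)$ is finite'', which by primitivity is either equality or all of $X\times X$. Suppose first that $\approx$ is equality. Then the $G$-invariant quasi-order given by ``$\Gamma(x)\setminus\Gamma(y)$ is finite'' has symmetric part $\approx$, so it is in fact a partial order $\preceq$. If $\preceq$ is also equality, then for all distinct $x,y$ both $\Gamma(x)\setminus\Gamma(y)$ and $\Gamma(y)\setminus\Gamma(x)$ are infinite, so Lemma~\ref{prop6.1} (or the sharper Lemma~\ref{orderedgraph}) applies directly and $\Gamma$, hence the action of $G$, is locally rigid. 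If $\preceq$ is nontrivial, then $G$ preserves a genuine nontrivial partial order on $X$, and I would feed this order, together with Lemma~\ref{ramsey2} and the strengthened graph criterion of Lemma~\ref{orderedgraph}, into the rigidity argument. Suppose instead that $\approx$ is all of $X\times X$, i.e.\ $\Gamma(x)\triangle\Gamma(y)$ is finite for every $x,y$. Then the $G$-equivariant map $x\mapsto[\Gamma(x)]\in\mathcal{P}(X)/\mathrm{fin}$ is constant, and since by primitivity distinct vertices have distinct open (and closed) neighbourhoods, one shows that $\Gamma$, or else $\overline{\Gamma}$, is locally finite. One is thereby reduced to a connected, locally finite, infinite, vertex-transitive graph with primitive automorphism group --- a very restricted situation, in which $X$ is exhausted by the finite balls $B_r(v)$ about a vertex $v$ --- and one obtains local rigidity by a ``ball with tags'' argument, again controlling the induced structure on remote indiscernible sets by Lemma~\ref{ramsey2}.

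The crux, and main obstacle, is exactly where the plain criterion of Lemma~\ref{prop6.1} fails: when $\Gamma$, and indeed every $G$-invariant graph, has ``thin'' neighbourhoods, as happens for instance when $\Gamma$ is the $\Z$-path, where no $G$-invariant graph satisfies the hypothesis of Lemma~\ref{prop6.1}. In those cases the asymmetry needed for rigidity must be extracted from additional invariant data --- the partial order $\preceq$ in the one branch, or distance and betweenness information coming from local finiteness in the other --- and combined with a Ramsey argument. Isolating the correct strengthening of Lemma~\ref{prop6.1} (this is Lemma~\ref{orderedgraph}) and checking, via the dichotomy in Lemma~\ref{ramsey2}(ii), that a large ``padding'' set cannot be too symmetric --- it must be totally ordered by some invariant relation rather than admit its full symmetric group as automorphisms, the latter being incompatible with primitivity once enough padding has been added --- is where the real work of the proof lies.
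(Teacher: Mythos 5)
Your first branch (the $G$-congruence ``$\Gamma(x)\triangle\Gamma(y)$ is finite'' is trivial) matches the paper: Lemma~\ref{orderedgraph} then applies directly, and since the order $<$ it defines is automatically $G$-invariant your sub-case split on whether $\preceq$ is trivial is unnecessary but harmless. The genuine gap is in the other branch, where that congruence is universal. Two steps are missing there. First, the claim that $\Gamma$ or its complement must then be locally finite does not follow from the remark that distinct vertices have distinct neighbourhoods; the paper proves it by a real argument, using that the edge set is a single $G$-orbit on $2$-sets, so that $G_x$ has at most two orbits on $\Gamma(x)$, whence $|\Gamma(y)\setminus\Gamma(x)|$ is uniformly bounded for $y\in\Gamma(x)$, and a pigeonhole argument with $k+1$ non-neighbours then gives a contradiction. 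Second, and more seriously, your treatment of the resulting locally finite case consists of the unexplained phrase ``ball with tags'': no finite set $V$ is constructed, and it is not at all clear why the setwise stabiliser in $G$ of a ball together with some tag vertices should fix $U$ pointwise, nor how the tags themselves are to be recognised by such a stabiliser. This is exactly where the work lies.

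The paper never proves local rigidity directly for locally finite graphs. Instead (Lemma~\ref{graph}) it replaces $\Gamma_0$ (locally finite, connected by primitivity) by the auxiliary $G$-invariant graph $\Gamma$ whose edges are the pairs at even $\Gamma_0$-distance; along a geodesic ray $v_0\sim v_1\sim v_2\sim\ldots$ (which exists by K\"onig's Lemma) the vertices $v_{2i}$ lie in $\Gamma(v_0)\setminus\Gamma(v_1)$, so this new graph has one pair with infinite symmetric difference, and since ``finite symmetric difference in $\Gamma$'' is again a $G$-invariant equivalence relation, primitivity forces it to be trivial, i.e.\ \emph{all} pairs have infinite symmetric difference in $\Gamma$. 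One is thus back in the situation of Lemma~\ref{orderedgraph}, applied to $\Gamma$ rather than $\Gamma_0$. This parity trick (or a fully worked-out substitute such as your distance-profile idea, which would need a proof that suitable separating far-away vertices exist and are fixed) is the key idea your proposal lacks; a smaller quibble is that your closing description of where Lemma~\ref{orderedgraph}'s difficulty lies is inaccurate, since in its proof the padding sets carrying the full symmetric group are not excluded by primitivity but are handled directly, though this does not affect your use of that lemma as a black box.
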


The proposition follows rapidly from the following two lemmas. The first uses an argument in \cite[Proposition 4.4]{macpherson}.
\begin{lem} \label{graph}
Let $G$ be a primitive but not 2-homogeneous permutation group on an infinite set $X$. Then there is a $G$-invariant graph $\Gamma$ with vertex set $X$ such that for all distinct $x,y\in X$, the symmetric difference $\Gamma(x)\triangle\Gamma(y)$ is infinite.
\end{lem}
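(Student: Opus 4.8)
\medskip\noindent\textbf{Proof strategy.} The plan is to build $\Gamma$ as a $G$-invariant graph on $X$ — that is, as a union of orbital graphs (orbits of $G$ on $X^{[2]}$) — using primitivity to reduce the problem to a separation property. The elementary observation driving everything is that for any $G$-invariant graph $\Delta$ the relation on $X$ defined by ``$\Delta(x)\triangle\Delta(y)$ is finite'' is a $G$-invariant equivalence relation (transitivity is clear from $\Delta(x)\triangle\Delta(z)\subseteq(\Delta(x)\triangle\Delta(y))\cup(\Delta(y)\triangle\Delta(z))$), so by primitivity it is either the equality relation or the universal relation; and it is equality precisely when $\Delta$ is the graph required by the lemma. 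Thus it suffices to produce a single $G$-invariant graph for which this relation is not universal.

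The mechanism I would use is the following separation property, which is where primitivity really enters: \emph{for all distinct $x,y\in X$ there are infinitely many $z$ such that $\{x,z\}$ and $\{y,z\}$ lie in different $G$-orbits on $X^{[2]}$.} Note that ``only finitely many $z$ separate $x$ from $y$'' is again a $G$-invariant equivalence relation, hence by primitivity trivial (which is the separation property) or universal, so one must rule out the universal case; here is where the hypothesis that $G$ is not $2$-homogeneous gets used. Granting separation, a ``sufficiently generic'' union $\Gamma$ of orbital graphs works: fix distinct $x,y$ and, for $z$ separating them, write $E_z,E_z'$ for the $G$-orbits of $\{x,z\}$ and $\{y,z\}$; then $z\in\Gamma(x)\triangle\Gamma(y)$ whenever exactly one of $E_z,E_z'$ is included in $\Gamma$, and since there are infinitely many separators of $\{x,y\}$, for each $N$ the set of choices of $\Gamma$ making at least $N$ of them have this property is dense and open in the space of all choices of which orbital graphs to include (greedily decide, one separator at a time, whether its two orbits lie in $\Gamma$; only finitely many decisions are made at each stage, so infinitely many separators remain available). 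Intersecting over $N$ and over the countably many pairs $\{x,y\}$, the choices of $\Gamma$ with $\Gamma(x)\triangle\Gamma(y)$ infinite for all distinct $x,y$ form a comeagre, hence non-empty, set. (If one wishes to avoid countability one replaces this Baire-category argument by a transfinite diagonalisation, for which the Ramsey principle of Lemma~\ref{ramsey2} is a convenient organising tool.) In particular this handles the otherwise awkward case in which every suborbit of $G$ is finite — so every single orbital graph is locally finite and useless, and $\Gamma$ must genuinely be an infinite union — which is exactly the case (think of a Tarski monster acting on the cosets of a cyclic subgroup of prime order) that defeats the naive argument using one orbital graph.

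I expect the main obstacle to be establishing the separation property, i.e. showing that the ``only finitely many separators'' relation cannot be universal for a primitive, non-$2$-homogeneous group; this is the point at which I would adapt the argument of \cite[Proposition~4.4]{macpherson}.
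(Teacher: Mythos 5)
There is a genuine gap, and it sits exactly where you placed your placeholder. Your argument reduces Lemma~\ref{graph} to the ``separation property'' (for all distinct $x,y$ there are infinitely many $z$ with $\{x,z\}$ and $\{y,z\}$ in different $G$-orbits on $X^{[2]}$), and you explicitly leave that property unproved, to be obtained by ``adapting'' \cite[Proposition 4.4]{macpherson}. But that property is essentially the whole content of the lemma: the lemma implies it at once (every element of $\Gamma(x)\triangle\Gamma(y)$ is a separator, since a $G$-invariant graph is a union of orbital graphs), and proving it is precisely the hard work. The difficult case is when the chosen orbital graph $\Gamma_0$ has $\Gamma_0(x)\triangle\Gamma_0(y)$ finite for \emph{all} pairs (i.e.\ the $G$-invariant equivalence relation is universal rather than trivial). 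The paper handles this by showing that then $\Gamma_0$ or its complement is locally finite --- using that $G_x$ has at most two orbits on $\Gamma_0(x)$ to get a uniform bound on $|\Gamma_0(y)\setminus\Gamma_0(x)|$ and derive a contradiction --- and then, with $\Gamma_0$ locally finite and connected (by primitivity), replacing it by the \emph{even-distance} graph $\Gamma$, where a geodesic ray $v_0\sim v_1\sim\cdots$ gives $v_{2i}\in\Gamma(v_0)\setminus\Gamma(v_1)$ for all $i>0$, after which primitivity spreads infiniteness to all pairs. None of that is circumvented by your reduction; without it your proof has no content beyond the (correct, and also used in the paper) observation that ``finite symmetric difference'' is a $G$-congruence.

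Moreover, the genericity step from the separation property back to the lemma is itself not sound as stated. The density claim fails whenever the orbit-pairs realised by separators involve only finitely many orbits on $X^{[2]}$ --- a typical situation: a primitive, non-2-homogeneous group may have just two orbits on $2$-sets (e.g.\ the automorphism group of the generic triangle-free graph). There a basic open condition that has already decided all the relevant orbits ``both in'' or ``both out'' admits no extension separating the given pair, so your parenthetical ``infinitely many separators remain available'' is false: a separator is usable only if at least one of its two orbits is still undecided, and infinitely many separators can share finitely many (already decided) orbit-pairs. Even the natural repair --- for each orbit of pairs pick, by pigeonhole, one ordered pair of distinct orbits realised by infinitely many separators, then seek a single set $S$ of orbits splitting every chosen pair --- needs a further argument: a triangle-type obstruction (three orbits $A,B,C$ with the three constraints demanding splits of $\{A,B\}$, $\{B,C\}$, $\{A,C\}$) has no solution $S$, and ruling such configurations out requires group-theoretic input, i.e.\ essentially Lemma~\ref{graph} again. (By contrast, the countability issue you flag for the Baire argument is minor and you address it adequately.) So the proposal is a plausible reorganisation, but both of its two halves --- the separation property and the passage from it to the desired graph --- are open as written.
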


\begin{proof}
Let $U$ be any $G$-orbit on the collection of 2-subsets of $X$. Then $U$ is the edge set of a  $G$-invariant graph $\Gamma_0$ with vertex set $X$, and as $G$ is not 2-homogeneous, $\Gamma_0$ is not complete. For $x\in X$, write $\Gamma_0(x)$ for the neighbour set of $x$ in $\Gamma_0$. Define the equivalence relation $\equiv_0$ on $X$, putting $x\equiv_0 y$ if and only if $|\Gamma_0(x)\triangle \Gamma_0(y)|$ is finite.
Then $\equiv_0$ is $G$-invariant, so by primitivity $\equiv_0$ is trivial or universal. The lemma holds if $\equiv_0$ is trivial, so we shall suppose that $\equiv_0$ is universal.

Recall that a graph is {\em locally finite} if all of its vertices have finite degree.

\medskip

{\em Claim.}
Either $\Gamma_0$ or its complement is locally finite.

\medskip

{\em Proof of Claim.} Suppose not, and fix $x\in X$. Then both
$\Gamma_0(x)$ and $X\setminus \Gamma_0(x)$ are infinite. If $y\in\Gamma_0(x)$ then (as $\equiv_0$ is universal) $\Gamma_0(y)\setminus \Gamma_0(x)$ is finite. Hence as $G_x$ has at most two orbits  on $\Gamma_0(x)$ there is $k\in \N$ such that for all $y\in \Gamma_0(x)$, we have
$|\Gamma_0(y)\setminus \Gamma_0(x)|\leq k$. Pick distinct $z_1,\ldots,z_{k+1}\in X\setminus (\{x\}\cup \Gamma_0(x))$. Then as $x\equiv_0 z_i$ for each $i$, each set $\Gamma_0(z_i)\cap \Gamma_0(x)$ is cofinite in $\Gamma_0(x)$. Hence there is $y\in\Gamma_0(x)\cap \bigcap_{i=1}^{k+1}\Gamma_0(z_i)$. Then $z_1,\ldots,z_{k+1}\in \Gamma_0(y)\setminus \Gamma_0(x)$, so $|\Gamma_0(y)\setminus \Gamma_0(x)|\geq k+1$, which is a contradiction.

\medskip

By the claim, replacing $\Gamma_0$ by its complement if necessary, we may suppose that $\Gamma_0$ is locally finite. By our original assumption that $\Gamma_0$ is not complete (or null), $\Gamma_0$ has an edge. By primitivity, $\Gamma_0$ is connected.
Now let $\Gamma$ be the graph on $X$ whose edge set consists of the set of unordered pairs an even distance apart in $\Gamma_0$. Then $\Gamma$ is also $G$-invariant. Pick $v_0 \in X$, and choose a $\Gamma_0$-path $v_0\sim v_1\sim v_2 \sim \ldots$ so that the distance $d_0(v_0,v_i)$ between $v_0$ and $v_i$ in $\Gamma_0$ equals $i$ for each $i$ (this is certainly possible, for example by K\"onig's Lemma). Then $v_{2i} \in \Gamma(v_0) \setminus \Gamma(v_1)$ for each $i>0$. Thus $\Gamma(v_0)$ and $\Gamma(v_1)$ have infinite symmetric difference, and since $G$ is primitive, this holds for all pairs of distinct vertices in $\Gamma$.
\end{proof}

In the next lemma, and  later in the paper, if $A,B$ are sets we write $A\subset_f B$
if $B\setminus A$ is infinite and $A\setminus B$ is finite. The lemma below extends Lemma~\ref{prop6.1}, since under the assumptions of that lemma, $x<y$ (as defined below) never holds. If $u,v,w$ are distinct vertices of the graph $\Gamma$, we say $w$ {\em separates} $u$ and $v$ if $w\in \Gamma(u)\triangle \Gamma(v)\setminus \{u,v\}$, and call a collection of such vertices $w$ a {\em separating set} for $u$ and $v$.

\begin{lem}\label{orderedgraph}
Let $\Gamma=(X,R)$ be an infinite graph, and suppose that $\Gamma(x)\triangle \Gamma(y)$ is infinite for any distinct $x,y\in X$. Write $x<y$ whenever $\Gamma(x)\supset_f\Gamma(y)$. Then the structure $\Gamma_<=(X,R,<)$ is locally rigid.
\end{lem}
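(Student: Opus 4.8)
I would prove Lemma~\ref{orderedgraph} by combining a gadget construction with the Ramsey principle of Lemma~\ref{ramsey2}, the order $<$ being used exactly where the graph alone cannot tell two vertices apart.

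\textbf{Preliminaries on $<$.} First I would check that $<$ is a strict partial order on $X$: irreflexivity and antisymmetry are immediate, and for transitivity note that if $\Gamma(x)\setminus\Gamma(y)$ and $\Gamma(y)\setminus\Gamma(z)$ are finite then so is $\Gamma(x)\setminus\Gamma(z)$, while if $\Gamma(z)\setminus\Gamma(y)$ is infinite and $\Gamma(x)\setminus\Gamma(y)$ finite then $\Gamma(z)\setminus\Gamma(x)$ is infinite. Next I would record the trichotomy: for distinct $x,y$, since $\Gamma(x)\triangle\Gamma(y)$ is infinite but $\Gamma(x)\setminus\Gamma(y)$ and $\Gamma(y)\setminus\Gamma(x)$ cannot both be finite, exactly one of the following holds: both are infinite (then $x,y$ are $<$-incomparable); $\Gamma(x)\setminus\Gamma(y)$ is infinite and $\Gamma(y)\setminus\Gamma(x)$ finite (then $y<x$); or vice versa ($x<y$). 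In particular, for any two distinct vertices there are infinitely many vertices adjacent to exactly one of them. Note that when $<$ is empty on $X$ the hypothesis of Lemma~\ref{prop6.1} holds and there is nothing to prove, so the content is the case where $<$ is nonempty; the construction below treats both uniformly.

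\textbf{The construction.} Fix a finite $U=\{u_1,\dots,u_k\}\subseteq X$. For each ordered pair $(u_i,u_j)$ with $\Gamma(u_i)\setminus\Gamma(u_j)$ infinite, fix an infinite set of witnesses contained in $\Gamma(u_i)\setminus\Gamma(u_j)$; by the trichotomy, for every unordered pair $\{u_i,u_j\}$ at least one witness set is chosen, and for incomparable pairs one is chosen on each side. There are finitely many such sets, each infinite, so I may take them pairwise disjoint and disjoint from $U$; call them $Q_1,\dots,Q_r$. Apply Lemma~\ref{ramsey2} with $A:=U$ and $L:=\{R,<\}$ to find, inside the $Q_s$, finite sets that are mutually indiscernible over $U$; passing to initial segments (which keeps mutual indiscernibility) I obtain $P_1,\dots,P_r$ with $|P_s|=n_s$, where $k,n_1,\dots,n_r$ are all distinct and $n_s$ grows very rapidly (say $n_s>k+\sum_{t<s}n_t$ for each $s$). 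Set $V:=U\cup P_1\cup\dots\cup P_r$, a finite substructure of $\Gamma_<$ containing $U$; note each $P_s$ still has all its points adjacent to one of the two $u$'s it was chosen for and none adjacent to the other.

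\textbf{Structure and conclusion.} By Lemma~\ref{ramsey2}(ii) each $P_s$ is, in $\Gamma$, a clique or an independent set with $<$ either empty or a total order on it; and from the singleton case of Definition~\ref{ramsey1}, $R$ and $<$ between two distinct blocks, and between a block and any fixed $u_i$, are uniform. So $\Gamma_<{\restriction}V$ decomposes into the ``blocks'' $U,P_1,\dots,P_r$ with uniform cross-links. It then remains to show every $g\in\Aut(\Gamma_<{\restriction}V)$ fixes $U$ pointwise. Granting the main step below — that $g$ respects the block partition, in particular $g(U)=U$ — the distinctness of $k,n_1,\dots,n_r$ forces $g$ to fix each block setwise, hence to preserve, for each $s$, the set $\sigma_s:=\{u\in U:u\sim P_s\}$ as a subset of $U$. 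Since for every pair $\{u_i,u_j\}$ some block is attached to exactly one of them, no $\sigma_s$ can be preserved if $g(u_i)=u_j$; as $g(u_i)\in U$ and this excludes every $u_j\neq u_i$, we get $g(u_i)=u_i$ for all $i$. Thus $V$ witnesses local rigidity of $\Gamma_<$ at $U$, and since $U$ was arbitrary, $\Gamma_<$ is locally rigid.

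\textbf{Main obstacle.} The delicate point is the claim that every automorphism $g$ of $\Gamma_<{\restriction}V$ respects the block partition (equivalently, cannot mix $U$ with the gadgets or merge gadgets in a way that moves a point of $U$). This needs a case analysis according to whether each block is a clique or an independent set — arguing in the complementary graph in the latter case — and a careful set-up of the growing sizes $n_s$ to handle the possibility that two blocks are completely joined, so that large cliques straddle several blocks; the invariant that actually pins a vertex to its block is a combination of its degree in $\Gamma_<{\restriction}V$, the largest clique and independent set through it, and its $<$-position. It is exactly in the nearly-nested configurations — a comparable pair $u_i<u_j$, where $\Gamma(u_j)\setminus\Gamma(u_i)$ may be finite or empty and so no infinite gadget is available on that side, and blocks on which $<$ is trivial are freely permutable — that the graph cannot by itself separate a point of $U$ from a gadget vertex, and the relation $<$ is used, since it rigidly records which of two vertices has the larger neighbourhood. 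Orchestrating these three ingredients (Ramsey uniformity, rapidly growing sizes, the order $<$) is the technical heart of the proof.
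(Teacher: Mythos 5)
Your overall strategy (separating gadgets for each pair of $U$, Ramsey-style mutual indiscernibility over $U$, pairwise distinct block sizes so that automorphisms fix the relevant classes setwise, then pointwise fixing of $U$ because each pair of $U$ is separated by a gadget) is the same as the paper's opening moves. But the step you defer to the ``main obstacle'' paragraph is a genuine gap, and it is exactly where the paper does most of its work. After one round of gadgets, the block partition of $V=U\cup P_1\cup\dots\cup P_r$ need not be invariant under $\Aut(V,R,<)$: it can happen that some $u\in U$ has, \emph{inside the finite set} $V$, exactly the same adjacencies as every vertex of some block $P_s$ to all of $V\setminus(P_s\cup\{u\})$, that $P_s\cup\{u\}$ is null (or complete), and that $u$ is $<$-incomparable to the elements of $P_s$ with the same $<$-relations to the rest of $V$. (Your hypothesis only guarantees $\Gamma(u)\triangle\Gamma(p)$ is infinite in $X$; it may miss $V$ entirely.) In that configuration any permutation of $P_s\cup\{u\}$, extended by the identity, is an automorphism of $(V,R,<)$ moving $u$; your distinct sizes $k,n_1,\dots,n_r$ only fix the class $P_s\cup\{u\}$ setwise, not pointwise, and the invariants you propose (degree, largest clique/independent set through a vertex, $<$-position) all coincide for $u$ and the vertices of $P_s$, so no such invariant can pin $u$ to $U$. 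The conclusion ``$g(U)=U$'' is therefore not established, and with it the whole endgame collapses.

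The paper's fix is structural, not a finer invariant: it performs a \emph{second} round of adjunctions. Whenever a point $u\in U$ is (in the induced structure $W$) equivalent to a gadget class $C$, it adjoins new ``huge'' sets $S_{cu}$ taken from $\Gamma(c)\setminus\Gamma(u)$ or $\Gamma(u)\setminus\Gamma(c)$ \emph{outside} $W$, so that $c$ separates $u$ from $S_{cu}$ and $u$ can no longer sit in a large class; and in precisely the cases where the needed difference set is finite (i.e.\ $u<c$ for all $c\in C$, or $c<u$ for all $c\in C$) it adds nothing and instead uses that $u$ is the unique $<$-extreme element of $C$, hence fixed by any $<$-preserving map. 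This second round creates a new difficulty of the same kind one level up --- the attachment point $c$ can be absorbed into the huge class $S_{cu}\cup\{c\}$ --- and the paper needs a further argument (its ``Claim'', using Lemma~\ref{ramsey2}(ii) and the mutual indiscernibility of the $S_{cu}$ over $W$) to show any such mixing can be undone by an automorphism fixing $U$ pointwise, so that one may reduce to automorphisms stabilising $W$ and then use $<$ and the separating classes to fix $U$ pointwise. Your proposal contains neither the second round of gadgets nor a substitute for this correction argument, so as it stands it proves local rigidity only under the stronger hypothesis of Lemma~\ref{prop6.1} (where no point of $U$ can be equivalent to a gadget that was chosen freshly enough), not in the generality of Lemma~\ref{orderedgraph}.
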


\begin{proof}
We slightly adapt the proof of Proposition 6.1 from \cite{sethomg}.
So let $U=\{u_1,\ldots,u_n\}$ be a finite subset of $X$. We aim to find finite $V$ with $U \subset V \subset X$, such that $\Aut(V,R,<)$ fixes $U$ pointwise.

For each $u_i, u_j \in U$, with $i < j$, we find an infinite separating set $Q_{ij}\subset X\setminus U$ as follows: if $u_i < u_j$, then let $Q_{ij} \subset \Gamma(u_i) \setminus (\Gamma(u_j)\cup\{u_j\})$; if $u_j < u_i$, then let $Q_{ij} \subset \Gamma(u_j) \setminus (\Gamma(u_i)\cup\{u_i\})$; and if $u_i \parallel u_j$ (that is, $u_i,u_j$ are incomparable under $<$), then let $Q_{ij} \subset \Gamma(u_i) \setminus (\Gamma(u_j)\cup\{u_j\})$. 

Let $K$ be a positive integer. 
By Lemma~\ref{ramsey2} with respect to the language $L=\{R,<\}$, we can choose for each $i<j$ a subset $P_{ij}$ of $Q_{ij}$ with $|P_{ij}|=K$, such that the collection of all sets $P_{ij}$ is mutually indiscernible over $U$. Let $W = U \cup \bigcup(P_{ij}:1\leq i<j\leq n)$. 
Then 
each $P_{ij}$ carries a  complete or null induced graph structure, and for each $x,y \in P_{ij}$ and $z \in W \setminus P_{ij}$, we have $x \sim z$ if and only if $y \sim z$.

For any subset $Y$ of $X$, define the equivalence relation $\approx_Y$ on $Y$, where,
for $x,y\in Y$, $x\approx_Y y$ if and only if 
$(\Gamma(x) \triangle \Gamma(y))\cap Y\subseteq \{x,y\}$.  Then $\approx_Y$-classes always carry a complete or null (that is, independent set) induced subgraph structure. 
If $Z$ is an $\approx_Y$-class, then for $z_1,z_2\in Z$ and $y\in Y\setminus Z$, we have 
$y\sim z_1 \Leftrightarrow y\sim z_2$; in particular, $\Aut(Y)_{(Y\setminus Z)}$ induces $\Sym(Z)$.
Observe that if $Y_1\subset Y_2\subseteq X$ and $x,y\in Y_1$, then
$x\approx_{Y_2} y $ implies $x\approx_{Y_1} y$. We often identify such $Y$ with the induced subgraph $(Y, R\cap Y^2)$ of $\Gamma$ which it carries. Thus, $\approx_Y$ is $\Aut(Y,R)$-invariant.

Now, each $P_{ij}$ lies in a $\approx_W$-class of $W$. Deleting some sets $P_{ij}$ if necessary (only where elements of distinct sets $P_{ij}$ are $\approx_W$-equivalent, and retaining the assumption that any two distinct elements of $U$ are separated by some set of form $P_{ij}$), we may suppose:  no two elements $x,y$ in distinct sets $P_{ij}, P_{kl}$ are $\approx_W$-equivalent. Also, $\approx_W$-classes contain at most one point of $U$; for if $u_i,u_j\in U$ with $i<j$ then there is a non-empty set $P_{kl}$ whose elements separate $u_i$ and $u_j$, so witness that $u_i\not\approx_W u_j$. Let $m={n\choose 2}$, an upper bound on the number of distinct sets $P_{ij}$. Adjusting the $P_{ij}$ and hence $W$ further, we arrange the sizes of the $P_{ij}$ so that $|P_{ij}|\geq 2$ for each $i,j$ and distinct $\approx_W$-classes of $W$ of size at least two all have different sizes, with size at most $m+1 \in \N$. Now every $\approx_W$-class of $W$ of size greater than 1 consists of a set $P_{ij}$, possibly together with an element of $U$. We will say that a set $Y \subseteq X$ is \emph{huge} if $|Y| > m+1$.

Any automorphism of $(W,R)$ will preserve $\approx_W$, and will fix setwise each $\approx_W$-class of size at least two (as these classes all have different sizes). Hence, if no element of $U$ is $\approx_W$-equivalent to any element of any $P_{ij}$ (that is, elements of $U$ lie in $\approx_W$-classes of size 1), then as the $P_{ij}$ separate the elements of $U$, any automorphism of $W$ will fix $U$ pointwise, as required. So
the concern is that some $\approx_W$-class $C$ in $W$ of size at least two might consist  of a set $P_{ij}$ together with some $u \in U$, in which case there would be  an automorphism of $(W,R)$ mapping $u$ to some vertex  in $C \setminus \{u\}$. 

So suppose $u \in C \cap U$ as in the last paragraph. By the Pigeonhole Principle (retaining all the above reductions, so initially working with larger sets $P_{ij}$) we may suppose for all such $C,u$ that either $u||c$ (that is, $u$ and $c$ are incomparable with respect to $<$) for all $c\in C\setminus\{u\}$, or $u<c$ for all $c\in C\setminus\{u\}$, or $c<u$ for all $c\in C\setminus\{u\}$. 
For  such $u,c$ and $C$, we add a finite set $S_{cu}$ of additional vertices of $\Gamma$ to $W$according to the following recipe.

If $C$ is null, then for each $c \in C \setminus \{u\}$ for which $c \ngtr u$, the set $\Gamma(c) \setminus \Gamma(u)$ is infinite, and we choose $S_{cu} \subset \Gamma(c) \setminus (\Gamma(u)\cup W)$. 
If $C$ is complete, then for each $c \in C \setminus \{u\}$ for which $c \nless u$,  the set $\Gamma(u)\setminus \Gamma(c)$ is infinite, and we choose $S_{cu} \subset \Gamma(u) \setminus (\Gamma(c)\cup W)$. In other cases ($C$ null and $c>u$ for all $c\in C\setminus \{u\}$, or $C$ complete and $c<u$ for all $c\in C\setminus \{u\}$) we do not add any corresponding set $S_{cu}$.
Each $S_{cu}$ (for $u\in U$ and  $c\in W\setminus U$ with $c\approx_W u$) is chosen to be huge, and these sets are chosen so that 
if $(c,u)\neq (c',u')$ then $S_{cu}\cap S_{c'u'}=\emptyset$. We may suppose, again by the Pigeonhole Principle, that for each
such $c,u$, either $c|| x$ for all $x\in S_{cu}$, or $c<x$ for all $x\in S_{cu}$, or $x<c$ for all $x\in S_{cu}$.
By Lemma~\ref{ramsey2} with respect to $L=\{R,<\}$, we may suppose that the collection of all such sets $S_{cu}$ is mutually indiscernible over $W$
(formally, before applying the lemma, the $S_{cu}$ may be taken to be infinite).
Let $V$ be the union of all such sets $S_{cu}$ and of $W$. 
Observe that each $S_{cu}$  is either complete or null, and for each $x,y \in S_{cu}$ and $z \in V \setminus S_{cu}$, we have $x \sim z$ if and only if $y \sim z$. In particular,  any two elements of a set $S_{cu}$ are $\approx_V$-equivalent. We  arrange that all elements of $V\setminus W$ lie in huge $\approx_V$-classes, and that distinct huge $\approx_V$-classes have different sizes, so each is fixed setwise by any automorphism of $(V,R)$.

We aim to show that every automorphism of $(V,R,<)$ must fix $U$ pointwise, which will complete the proof of the lemma. As a first step,  observe  that every huge $\approx_V$-class $S$ contains some set $S_{cu}$. We claim that no huge $\approx_V$-class meets $U$. For suppose $S$ is a huge $\approx_V$-class, with $a\in  U\cap S$. There is $u\in U$ and $c\in W\setminus U$, and a
$\approx_W$-class $C$ with $c,u\in C$,  such that
$S\supset S_{cu}$. Clearly $a=u$, since otherwise, as $S_{cu}$ separates $c$ from $u$,  $a$ would separate $c$ and $u$ and lie in $U\subset W$, contradicting  that $c \approx_W u$. Now if $C$ is null 
 then, by our rule for the process adding $S_{cu}$,  all vertices of $S\setminus \{u\}$ are adjacent to $c$; hence $c$ separates $u$ from other elements of $S$, so $u\not\in S$, a contradiction. Likewise, if $C$ is complete, then all vertices of $S\setminus\{u\}$ are non-adjacent  to $c$, so again $c$ separates $u$ from the rest of $S$, so $u\not\in S$. This proves the claim.
 
\medskip

{\em Claim.} Let $g\in \Aut(V,R,<)$. Then there is $h\in \Aut(V,R,<)_{(U)}$ such that $gh$ fixes $W$ setwise. 

\medskip

{\em Proof of Claim.}
There are distinct (so different-sized) huge $\approx_V$-classes $S_j$ (for $j\in J$), each fixed setwise by $g$,  such that 
$V\setminus W \subseteq \bigcup_{j\in J} S_j$. We may assume that $W$ is not fixed setwise by $g$, as otherwise the claim is trivial. Hence, for some $j\in J$, we have  $(S_j \cap (V\setminus W))^g\neq S_j \cap (V\setminus W)$. 

First, we show that $|S_j\cap W|=1$. 
There are $u\in U$, and some $\approx_W$-class $C$ containing distinct elements $u,c$ of $W$, such that
$S_j\supseteq S_{cu}$. We may suppose that $C$ is null, and $S_{cu}\subset \Gamma(c)\setminus \Gamma(u)$, as the other case where $C$ is complete and 
$S_{cu}\subset \Gamma(u)\setminus \Gamma(c)$ is similar. Now no element of $W\setminus\{u,c\}$ could lie in $S_j$, for otherwise it would separate $u$ from $c$ in $W$ contradicting that $u\approx_W c$. Hence, 
$S_j \cap W\subseteq \{c\}$, so due to the existence of the element $g$, we have $S_j\cap W=\{c\}$. In fact,
$S_j=S_{cu}\cup\{c\}$: for if $c'\in C\setminus \{u,c\}$ then $S_{c'u}\neq \emptyset$ but $c'$ separates elements of $S_{c'u}$ from $c$ so elements of $S_{c'u}$ do not lie in $S_j$; and if $u'\in U\setminus\{u\}$, then no set of form $S_{du'}$ could be a subset of $S_j$, for otherwise $c$ (in $W$) would separate $d$ from $u'$ so the set $S_{du'}$ would not have been added.
 
By our assumption, there is $v\in S_{cu}$ such that $v^g=c$. It is not possible that $S_{cu}$ is totally ordered by $<$; this follows easily from the facts that $g$ induces an automorphism of $(S_{cu}\cup\{c\},<)$,  and the earlier assumption that either $c<x$ for all $x\in S_{cu}$, or $x<c$ for all $x\in S_{cu}$, or $c||x$ for all $x\in S_{cu}$.  
 It follows by Lemma~\ref{ramsey2}(ii) that any permutation of $S_{cu}$, extended by the identity on the rest of $V$, is an automorphism of $(V,R,<)$. In particular distinct elements of $S_{cu}$ are $<$-incomparable, so as $v^g=c$, $S_j$ is an antichain with respect to $<$. Now it could not happen that there is some  $t\in V\setminus S_j$ whose $<$-relation to $c$ is different from its $<$-relation to all other elements of $S_j$. For otherwise $t^{g^{-1}}$ would have a different $<$-relation to $v$ and to all other elements of $S_j$, contradicting the mutual indiscernibility in the construction of $S_{cu}$. It follows that
if $g'$  is the inverse of $g$ on $S_j$ and the identity on the rest of $V$, then $g'\in\Aut(V,R,<)$. The element $h$ of the claim will be a product of elements of the form 
$g'$,  each acting on a different huge $\approx_V$-class.

\medskip

To finish the proof of the lemma, let $g\in \Aut(V,R,<)$, and let $h$ be as in the claim. We must show $u^g=u$ for all $u\in U$. 
Now by construction $gh$ fixes $W$ setwise, and we claim that $gh$ fixes $U$ setwise. 
Indeed, suppose for a contradiction that $u\in U$ and 
 $u^{gh}\not\in U$. As the $\approx_W$-classes of $W$ of size greater than one are all of different sizes, they are all fixed setwise by $gh$.
Hence, 
 as all elements of $W\setminus U$ lie in $\approx_W$-classes of size greater than one, $u^{gh}$ and hence also $u$ lie in some $\approx_W$-class $C$ of size greater than one. Now, by the construction of $V$ from $U$, either 
$u$ is the greatest or least element of $C$ with respect to $<$,  or 
$u$ and $u^{gh}$ are separated by some huge set of form $S_{u,u^{gh}}$. The first case is impossible as $gh$ preserves $<$.  The second case is also impossible, since as the huge $\approx_V$-classes all have different sizes, they are fixed setwise by $gh$. Thus, as claimed, $gh$ induces an automorphism of $(W,R,<)$ which fixes $U$ setwise.
Hence $gh$ fixes $U$ pointwise; for any two distinct elements of $U$ are separated by an $\approx_W$-class of size greater than one, and all such classes have different sizes, so are fixed setwise by $gh$.
  Thus,  $g$ fixes $U$ pointwise.
\end{proof}

\begin{rem} \label{careful}Careful inspection of the above proof shows that if $|U|=n$, then $V$ may be chosen to have size at most $O(n^8)$.  For in constructing $W$ from $U$, if $m={n\choose 2}$ we add at most $m$ sets $P_{ij}$, each of size at least 2 and all of different sizes, so
$|W|= n+k$ where $k:=|W\setminus U|\leq \frac{(m+1)(m+2)}{2}-1$. Then in adding the sets $S_{cu}$ to obtain $V$, we add at most $k$ such sets, each of size at least $m+2$, and all of different sizes. Thus, $|V\setminus W|\leq (m+2)+(m+3)+\ldots+(m+k+1)=\frac{k}{2}(2m+k+3)$. Thus,
$|V|\leq \frac{1}{2}(2n+k(2m+k+5)).$
This is used in Theorem~\ref{stronger} below.
\end{rem}

\begin{proof}[Proof of Proposition~\ref{not2hom}]  By Lemma~\ref{graph}, there is a $G$-invariant graph $\Gamma$ on $X$ such that for all distinct $x,y\in X$, the set $\Gamma(x)\triangle \Gamma(y)$ is infinite.
 The partial order $<$ defined 
in Lemma~\ref{orderedgraph} is clearly also $G$-invariant. The proposition thus follows immediately from that lemma. 
\end{proof}

\section{$G$ $2$-homogeneous but not $2$-transitive}

By Proposition~\ref{not2hom}, to complete the proof of Theorem~\ref{mainthm} it suffices to prove the following.

\begin{prop} \label{not2trans} Let $G$ be a 2-homogeneous but not 2-transitive permutation group on an infinite set $X$. Then the action of $G$ on $X$ is locally rigid.
\end{prop}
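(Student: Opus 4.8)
The plan is to exhibit $G$ as a group of automorphisms of a suitable $G$-invariant tournament and then run a local-rigidity argument. Since $G$ is $2$-homogeneous but not $2$-transitive, $G$ has exactly two orbits on the ordered pairs of distinct elements of $X$; as $G$ cannot reverse an ordered pair (this would force $2$-transitivity, combining with $2$-homogeneity), these two orbits are mutually reverse, and choosing one of them defines a $G$-invariant tournament $T=(X,\to)$ on which $G$ acts transitively on arcs. Write $T^+(x)=\{y:x\to y\}$ and $T^-(x)=\{y:y\to x\}$. By Lemma~\ref{locallyrigid} it is enough to prove that $G$ acts locally rigidly, and I will in fact show that $T$ is a \emph{cofinally rigid} tournament: every finite sub-tournament of $T$ extends to a finite sub-tournament with trivial automorphism group. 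Local rigidity of the $G$-action is then immediate: given finite $U\subseteq X$, take such a rigid $V\supseteq U$; any $g\in G_{\{V\}}$ restricts to an automorphism of the induced (rigid) tournament on $V$, hence fixes $V$, and so $U$, pointwise.

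The first substantive step is the analogue of Lemma~\ref{graph} for $T$: for all distinct $x,y\in X$ the symmetric difference $T^+(x)\triangle T^+(y)$ is infinite. As in Lemma~\ref{graph}, the relation $x\equiv y \Leftrightarrow |T^+(x)\triangle T^+(y)|<\infty$ is a $G$-invariant equivalence relation, so by primitivity of $G$ ($2$-homogeneous implies primitive) it is trivial or universal, and it suffices to exclude the universal case. This is where the tournament structure enters: if $\equiv$ is universal then, since $G$ is arc-transitive, $|T^+(x)\setminus T^+(y)|$ is constant over arcs $x\to y$ with $y\in T^+(x)$, say equal to $k$; hence every vertex of the tournament induced on $T^+(x)$ has in-degree at most $k$ there. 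As a finite tournament on $m$ vertices has a vertex of in-degree at least $(m-1)/2$, this forces $T^+(x)$ to be finite, and symmetrically $T^-(x)$ is finite, contradicting the infinitude of $X$ (since $X=T^+(x)\cup T^-(x)\cup\{x\}$). Thus $T^+(x)\triangle T^+(y)$ is always infinite (and $T^+(x),T^-(x)$ are infinite). As in Lemma~\ref{orderedgraph}, I then define a $G$-invariant partial order $\preceq$ by $x\prec y \Leftrightarrow T^+(x)\subset_f T^+(y)$ (antisymmetry uses that $\equiv$ is trivial), and note $G\le\Aut(X,\to)=\Aut(X,\to,\preceq)$.

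The main work is to show that $(X,\to,\preceq)$ has the extension property that every finite substructure $U$ lies in a finite substructure $V$ with $\Aut(V)=1$, adapting the Ramsey-theoretic proof of Lemma~\ref{prop6.1}/Lemma~\ref{orderedgraph}. For each pair $u_i,u_j\in U$ choose an infinite separating set inside whichever of $T^+(u_i)\setminus T^+(u_j)$, $T^+(u_j)\setminus T^+(u_i)$ is infinite (the choice governed by the $\preceq$-relation between $u_i$ and $u_j$, exactly as in Lemma~\ref{orderedgraph}), then apply Lemma~\ref{ramsey2} with $L=\{\to,\preceq\}$ to shrink these to finite mutually indiscernible sets $P_{ij}$ over $U$. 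By Lemma~\ref{ramsey2}(ii), since no transposition is an automorphism of a tournament (it reverses an arc), the alternative ``every permutation of $P_{ij}$ extends to an automorphism'' is impossible, so some relation of $L$ totally orders $P_{ij}$; by mutual indiscernibility $\to$ then linearly orders $P_{ij}$, i.e. each $P_{ij}$ induces a transitive, hence \emph{rigid}, tournament. One sets $W=U\cup\bigcup P_{ij}$, arranges the sizes of the $P_{ij}$ to be pairwise distinct, adds further indiscernible linearly-ordered blocks (playing the role of the sets $S_{cu}$ in the proof of Lemma~\ref{orderedgraph}) to separate any points of $U$ absorbed into an attached block, and lets $V$ be the result. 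Having distinct sizes, the attached blocks are each fixed setwise by every automorphism of $V$, hence pointwise (transitive tournaments are rigid); since they separate the points of $U$, every automorphism of $V$ fixes $U$ pointwise, and the bookkeeping is arranged so that in fact $\Aut(V)=1$.

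The main obstacle is precisely this last step. In the graph setting of Lemma~\ref{orderedgraph} the attached blocks are complete or null graphs, which are exactly the classes of the auxiliary equivalence relation $\approx_Y$ (``$x$ and $y$ relate identically to every other vertex of $Y$''), controlled via those classes' sizes. In a tournament $\approx_Y$ is \emph{not transitive} --- if $x\approx_Y y$ and $y\approx_Y z$ with $x,y,z$ distinct, then $x$ and $z$ relate oppositely to $y$ --- so the graph argument does not transfer verbatim; instead the attached linearly-ordered blocks must be recognised directly as the autonomous sets (modules) of $V$ inducing transitive tournaments, i.e. the subsets $M$ such that every vertex outside $M$ relates uniformly to all of $M$, and one must cope with the fact that a long such block contains shorter ones as sub-modules. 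Getting this bookkeeping right (which blocks to retain, how to fix their sizes, how to treat absorbed points of $U$) is the delicate part, but it follows the template of the proof of Lemma~\ref{orderedgraph}. Once $(X,\to,\preceq)$ is known to have the extension property, $T$ is a cofinally rigid tournament with $G\le\Aut(X,\to,\preceq)$, and Proposition~\ref{not2trans} follows as in the first paragraph.
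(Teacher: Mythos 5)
Your reduction to a $G$-invariant tournament, and your argument that the symmetric differences $\Gamma^+(x)\triangle\Gamma^+(y)$ are infinite (via arc-transitivity and the fact that a finite tournament on $m$ vertices has a vertex of in-degree at least $(m-1)/2$), are correct, and the latter is a pleasant alternative to the paper's argument, which instead uses $2$-homogeneity to make $|\Gamma^+(x)\triangle\Gamma^+(y)|$ constant and an increasing chain to force it infinite. However, the heart of the proposition --- the construction of a finite superset $V\supseteq U$ with $\Aut(V,\arc)=1$ and the verification that it works --- is not actually carried out in your proposal. You set up the blocks $P_{ij}$, observe correctly that the graph machinery of Lemma~\ref{orderedgraph} does not transfer (the relation ``$x$ and $y$ are related in the same way to every other vertex'' is not an equivalence relation in a tournament), propose to replace the $\approx_Y$-classes by modules and to add a second round of blocks $S_{cu}$, and then defer exactly the decisive step: ``getting this bookkeeping right \ldots is the delicate part, but it follows the template.'' Since the nested-module problem you yourself flag is precisely what must be controlled, this is a genuine gap, not a routine omission.

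The paper avoids the difficulty rather than fighting it. First (Corollary~\ref{coro}), $2$-homogeneity is used again: defining $x<y$ iff $\Gamma^+(x)\setminus\Gamma^+(y)$ is infinite, one sees that if some one-sided difference were finite then $<$ would be a $G$-invariant total order agreeing with $\arc$, i.e.\ $T$ is a linear order, in which case every finite subtournament is already rigid; otherwise \emph{both} $\Gamma^+(x)\setminus\Gamma^+(y)$ and $\Gamma^+(y)\setminus\Gamma^+(x)$ are infinite for all $x\neq y$. You only established that the symmetric difference is infinite, which gives separators in one direction per pair and is exactly what forces you into the heavier two-stage machinery with the auxiliary order $\preceq$. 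With two-sided separators $u_i\arc V_{ij}\arc u_j$ and $u_j\arc V_{ji}\arc u_i$, Proposition~\ref{cofrigid} needs only one round: the maximal ``good'' sets (nice and $\arc$-linearly ordered) partition any tournament (Lemma~\ref{maxgood}); a good subset of $V$ can contain at most one point of $U$, since two such points together with $V_{ij}$ and $V_{ji}$ would create a directed cycle; choosing the $|V_{ij}|$ as distinct powers of $2$ makes all non-singleton maximal good sets have distinct sizes, so each is fixed setwise by any automorphism, hence pointwise --- because a linearly ordered block \emph{is} rigid. This last point also dissolves your worry about points of $U$ ``absorbed'' into a block: an absorbed point lies in a setwise-fixed linear order and so is automatically fixed, which is why no sets $S_{cu}$, no order $\preceq$, and none of the deferred bookkeeping are needed. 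To repair your write-up, either establish the two-sided infiniteness (plus the trivial linear-order case) and run this one-round argument, or else genuinely carry out the module analysis you sketch; as it stands the proof is incomplete at its central step.
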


Recall that a {\em tournament} is a directed loopless digraph $(T,\to)$ such that for any
distinct vertices $x,y$, exactly one of $x \to y$ or $y\to x$ holds. A group which is $2$-homogeneous but not $2$-transitive has just one orbit on unordered $2$-sets, but two orbits on ordered pairs of distinct elements. Each of these orbits is the arc set of a $G$-invariant tournament with vertex set $X$. Thus, to prove Proposition~\ref{not2trans}, we develop analogues of the methods of Section 2, but for tournaments.

Let $\arc$ denote the arc relation in a tournament $T = (X, \arc)$, and let $G=\Aut(T)$. For $x \in X$, we let $\Gamma^+(x) := \{ y \in X: x \arc y \}$, the set of \emph{outneighbours} of $x$. For $x,y,z \in X$, we say that $z$ \emph{separates} $x,y$ if $x \arc z \arc y$ or $y \arc z \arc x$. Furthermore $Z \subset X$ \emph{separates} $x,y$ if each $z \in Z$ separates $x,y$. We write $x \arc Z$ if $x \arc z$ for each $z \in Z$. 

\begin{prop} \label{cofrigid}
Let $T=(X,\arc)$ be an infinite tournament such that for any distinct $x,y\in X$, 
the sets $\Gamma^+(x)\setminus \Gamma^+(y)$ and $\Gamma^+(y)\setminus \Gamma^+(x)$ are both infinite. Then $T$ is cofinally rigid.
\end{prop}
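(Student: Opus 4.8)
\textbf{Proof proposal for Proposition~\ref{cofrigid}.}

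The plan is to mimic the structure of the proof of Lemma~\ref{orderedgraph}, replacing symmetric-difference arguments about neighbour sets by arguments about outneighbour sets in the tournament $T$, and strengthening "the stabiliser of $V$ fixes $U$ pointwise" to "$\Aut(V)$ is trivial" by exploiting the total order that a tournament induces on each small piece. Given a finite $U=\{u_1,\dots,u_n\}\subset X$, I would first, for each pair $u_i,u_j$ with $i<j$, choose an infinite separating set $Q_{ij}\subset X\setminus U$: by hypothesis one of $\Gamma^+(u_i)\setminus\Gamma^+(u_j)$ or $\Gamma^+(u_j)\setminus\Gamma^+(u_i)$ (indeed both, after removing $u_i,u_j$) is infinite, so pick $Q_{ij}$ inside such a set, so that every element of $Q_{ij}$ separates $u_i$ and $u_j$ in the sense defined above. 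Then apply Lemma~\ref{ramsey2}(i) with language $L=\{\arc\}$ to extract finite mutually indiscernible subsets $P_{ij}\subseteq Q_{ij}$, each of a size to be fixed later. By Lemma~\ref{ramsey2}(ii), since the only binary relation available is $\arc$ which on any subset is already a tournament, each $P_{ij}$ either carries a \emph{total order} under $\arc$ — i.e. is transitively ordered, a "transitive subtournament" — or is such that every permutation of $P_{ij}$ (fixing the rest pointwise) is an automorphism. In a tournament the latter forces $|P_{ij}|\le 1$ (two elements are always comparable and a transposition reverses the arc between them), so in fact we may assume each $P_{ij}$ of size $\ge 2$ is transitively ordered by $\arc$.

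Next I would set $W=U\cup\bigcup_{i<j}P_{ij}$ and introduce, as in Lemma~\ref{orderedgraph}, the equivalence relation $\approx_Y$ on a set $Y$ (here: $x\approx_Y y$ iff no element of $Y\setminus\{x,y\}$ separates $x$ and $y$), observing that a $\approx_Y$-class in a tournament is a transitively ordered subtournament, and that for $z_1,z_2$ in a class $Z$ and $y\in Y\setminus Z$ we have $y\to z_1\Leftrightarrow y\to z_2$. After the usual pruning (discard $P_{ij}$ whose elements are $\approx_W$-equivalent to elements of other $P_{kl}$, retaining that each pair from $U$ is still separated by some surviving $P_{ij}$; note $\approx_W$-classes meet $U$ in at most one point) I would arrange the sizes so that the distinct $\approx_W$-classes of size $\ge 2$ all have \emph{different} sizes. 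The concern, exactly as before, is a $\approx_W$-class $C$ consisting of some $P_{ij}$ together with a point $u\in U$; I would then add, for each such pair $(u,c)$ with $c\in C\setminus\{u\}$, a "huge" mutually-indiscernible transitively-ordered set $S_{cu}$ separating $u$ from $c$ — concretely chosen inside $\Gamma^+(c)\setminus\Gamma^+(u)$ or $\Gamma^+(u)\setminus\Gamma^+(c)$, whichever is infinite, respecting the orientation so that $c$ genuinely separates $u$ from the elements of $S_{cu}$ — forming $V=W\cup\bigcup S_{cu}$ with all new $\approx_V$-classes huge and of pairwise different sizes. The point of "huge" is that every $\approx_V$-class of size $>m+1$ (where $m=\binom n2$) contains one of the $S_{cu}$, and I would show, as in Lemma~\ref{orderedgraph}, that no huge $\approx_V$-class meets $U$: if $u\in U$ lay in a huge class $S\supseteq S_{cu}$ then $c$ would separate $u$ from the rest of $S$, contradicting $u\in S$.

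Finally I must upgrade "$\Aut(V)$ fixes $U$ pointwise" to "$\Aut(V)=1$", and here the tournament structure does the extra work. Any $g\in\Aut(V,\arc)$ preserves $\approx_V$ and fixes each $\approx_V$-class of size $\ge 2$ setwise (different sizes); but a $\approx_V$-class is transitively ordered by $\arc$, so $g$ fixes it \emph{pointwise}. Hence $g$ fixes every point lying in a $\approx_V$-class of size $\ge 2$. It remains to deal with points in singleton $\approx_V$-classes: I would note that in the constructed $V$ each such point is separated from every other point of $V$ by some $\approx_V$-class of size $\ge 2$ (for pairs inside $W$ this is arranged by the $P_{ij}$ and the $S_{cu}$; for the interaction of $W$ with $V\setminus W$ and within $V\setminus W$ it follows from the indiscernibility construction, possibly after one more pruning step ensuring no two distinct singleton classes are $\approx_V$-equivalent, which in a tournament is automatic once they are in different classes), and being fixed pointwise by $g$ these separating classes pin down each such point. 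Therefore $g$ is the identity, so $V$ witnesses cofinal rigidity of $T$. The main obstacle I anticipate is exactly the bookkeeping in the last paragraph — verifying that after all the pruning and size-adjustments every point of $V$ really is "located" by the transitively-ordered classes it is separated by, i.e. that no residual automorphism can permute singleton classes; this is where the tournament hypothesis (no nontrivial automorphism of a finite transitive tournament, and comparability of any two vertices) must be used carefully, in place of the more delicate order-vs-complete/null case analysis of Lemma~\ref{orderedgraph}.
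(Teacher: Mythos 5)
There is a genuine gap at the heart of your bookkeeping. Your proof is organised around the relation $x\approx_Y y$ meaning ``no element of $Y\setminus\{x,y\}$ separates $x$ and $y$'', and you treat its classes exactly as in Lemma~\ref{orderedgraph}: classes of size at least $2$ of distinct sizes are fixed setwise, huge classes contain some $S_{cu}$, etc. But in a tournament this relation is \emph{not} an equivalence relation, so ``$\approx_V$-classes'' are not defined. Indeed, if $x\approx_Y y$ and $y\approx_Y z$ with $x,y,z$ distinct, then (taking $w=z$ and $w=x$ in the definitions) $x\arc y \Leftrightarrow x\arc z \Leftrightarrow y\arc z$, so either $x\arc y\arc z$ or $z\arc y\arc x$, and in both cases $y$ separates $x$ from $z$; concretely, in a transitive triple $a\arc b\arc c$, $a\arc c$ one has $a\approx b$ and $b\approx c$ but $a\not\approx c$. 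In the graph case the symmetry of adjacency is exactly what makes $\approx_Y$ transitive; with directed arcs it fails, and with it fail the statements you rely on (``a $\approx_Y$-class is a transitively ordered subtournament'', ``classes of distinct sizes are fixed setwise'', ``all of $V\setminus W$ lies in huge classes''), including the whole second stage with the sets $S_{cu}$ and the final pinning-down of singleton classes.

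The paper repairs precisely this point with a different device: it calls a set $A$ \emph{nice} if no vertex outside $A$ separates two vertices of $A$, \emph{good} if it is nice and totally ordered by $\arc$, and proves (Lemma~\ref{maxgood}) that the maximal good sets partition the vertex set; these play the role you wanted the $\approx$-classes to play. With that in hand the construction becomes a single stage and much shorter than Lemma~\ref{orderedgraph}: for each ordered pair $u_i,u_j$ choose, by Ramsey plus Lemma~\ref{ramsey2}, a separating set $V_{ij}\subseteq\Gamma^+(u_i)\setminus\Gamma^+(u_j)$ that is totally ordered by $\arc$, nice inside the finite tournament $T'$, and of size a distinct power of $2$. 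A short argument (using both $V_{ij}$ and $V_{ji}$ to create a $3$-cycle) shows any good set meets $U$ in at most one point, so every maximal good set is a union of some of the $V_{ij}$ plus at most one point of $U$; the power-of-$2$ sizes make all non-singleton maximal good sets have distinct cardinalities, so each is fixed setwise, hence pointwise since a finite total order is rigid, and then the elements of $U$ are pinned by the (now pointwise fixed) separating sets. In particular your anticipated problem case ``class $=P_{ij}\cup\{u\}$'', and the whole apparatus of the $S_{cu}$ sets and the pigeonhole/ordering arguments, simply do not arise. Your Ramsey/indiscernibility and distinct-sizes ideas, and the observation that transitive tournaments are rigid, are all in the right spirit, but as written the argument does not go through without replacing the $\approx$-classes by the maximal good sets (or an equivalent correct partition).
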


We first isolate an easy lemma, used to prove Proposition~\ref{cofrigid}, in case it has other uses. It may be  known.

Let $T=(X,\arc)$ be a tournament.
We will say that $A \subset X$ is a \emph{nice} set if $A\neq \emptyset$ and for all $a_1, a_2 \in A$ and $v \in X \setminus A$, we have $a_1 \arc v$ if and only if $a_2 \arc v$. (That is, all vertices in a nice set are related in the same way to vertices outside the nice set; equivalently, no vertex outside a nice set separates a pair of vertices inside the nice set.)
Note that vacuously any singleton is a nice set, and $X$ is nice. 
Furthermore, we will say that $A \subset X$ is a \emph{good} set, if $A$ is totally ordered by $\arc$ and is nice.
We consider the \emph{maximal} good subsets of $X$, that is, good sets $A$ such that there is no good set $A' \subset X$ with $A' \supset A$.  

\begin{lem}\label{maxgood}
If $T=(X,\arc)$ is a tournament, then the maximal good subsets of $X$ form a partition of $X$.
\end{lem}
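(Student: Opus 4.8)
The plan is to show that maximal good sets partition $X$ by proving (a) every vertex lies in some maximal good set, and (b) two maximal good sets that intersect must coincide. Part (a) is immediate: $\{x\}$ is vacuously nice and is trivially totally ordered by $\arc$, hence good, so $x$ lies in at least one good set; since good sets are bounded in size only up to $|X|$, a Zorn's Lemma argument (the union of a chain of good sets is easily checked to be good — it is totally ordered by $\arc$ since any two elements lie in a common member of the chain, and it is nice by the same token) gives a maximal good set containing $x$. So the real content is part (b).

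For part (b), the key claim to establish is: \emph{if $A$ and $B$ are good sets with $A \cap B \neq \emptyset$, then $A \cup B$ is good.} Granting this, two maximal good sets meeting in a point have good union, which by maximality forces each to equal the union, hence to be equal. To prove the claim, fix $c \in A \cap B$. First I would check that $A \cup B$ is nice: take $a_1, a_2 \in A \cup B$ and $v \in X \setminus (A \cup B)$; I must show $a_1 \arc v \ifif a_2 \arc v$. If both $a_i$ lie in $A$, this follows since $A$ is nice and $v \notin A$; similarly if both lie in $B$. If $a_1 \in A$ and $a_2 \in B$, then since $c \in A$ and $v \notin A$, niceness of $A$ gives $a_1 \arc v \ifif c \arc v$, and since $c \in B$ and $v \notin B$, niceness of $B$ gives $c \arc v \ifif a_2 \arc v$; chaining these gives the result. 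Hence $A \cup B$ is nice.

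It remains to show $A \cup B$ is totally ordered by $\arc$, i.e. that $\arc$ restricted to $A \cup B$ is transitive (asymmetry and totality come for free in a tournament). The main obstacle is precisely this transitivity check, since it mixes elements of the two orders. Suppose $x \arc y \arc z$ with $x, y, z \in A \cup B$; I must deduce $x \arc z$. If all three lie in $A$ (or all in $B$), transitivity of the total order on that set gives it. Otherwise, I use niceness to transport the relation through the common element $c$: for instance, if $x \in A$ but $z \notin A$, then since $A$ is nice and $z \notin A$, we have $x \arc z \ifif c \arc z$ (using $c \in A$), so it suffices to determine the relation between $c$ and $z$; similarly one relates $c$ to $x$ and to $y$ through whichever of $A, B$ contains them. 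The point is that each of $x, y, z$ is $\arc$-related to $c$ exactly as some fixed element of a total order containing $c$ is, so the relative $\arc$-order of $\{x,y,z,c\}$ is controlled; a short case analysis on which of $A$, $B$ each of $x, y, z$ belongs to, always routing comparisons through $c$ via niceness, yields $x \arc z$. (One must be slightly careful when, say, $y = c$ or $x = c$, but those cases are even easier.) This establishes the claim, and hence the lemma.
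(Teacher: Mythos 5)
Your proposal is correct and follows essentially the same route as the paper: show that the union of two intersecting good sets is nice by routing all comparisons through a common point, show it is totally ordered by $\arc$, and conclude via maximality, with Zorn's Lemma and the goodness of singletons supplying existence. The only (cosmetic) difference is that the paper rules out a 3-cycle in $A\cup B$ by a separation contradiction, whereas you verify transitivity directly through the common element $c$; the sketched case analysis does go through.
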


\begin{proof}
We claim that
if $A$ is good and $B \ne A$ is maximal good (where $A,B\subseteq X$), then either $A \subset B$ or $A \cap B = \emptyset$.
To see this, let
$d \in A \cap B$, and 
let $C = A \cup B$. 
We show that $C$ is good, which ensures $B=C$.

Let $c_1, c_2 \in C, ~v \in X \setminus C$. Now $c_1 \arc v$ if and only if $d \arc v$ if and only if $c_2 \arc v$. This holds because $A$ and $B$ are both nice and $d \in A \cap B$. Hence $C$ is nice.
If $C$ is not totally ordered, then there is some 3-cycle $c_1 \arc c_2 \arc c_3 \arc c_1$ in $C$. Since $A$ and $B$ are both totally ordered, we must have at least one of these points in $A \setminus B$ and one in $B \setminus A$. Suppose $c_1 \in A \setminus B$ and $c_2 \in B \setminus A$ (the other case is similar). Then if $c_3 \in A$, then $c_2$ separates $c_1, c_3$, contradicting the fact that $A$ is nice. Otherwise  $c_3 \in B$, then similarly $c_1$ separates $c_2, c_3$, contradicting the fact that $B$ is nice. Hence $C$ is totally ordered.
Now $B \subseteq C$, and $C$ is good, so $A\subseteq B=C$ by maximality of $B$.

The lemma follows immediately from the claim (using Zorn's Lemma if $X$ is infinite), since each singleton in $X$ is a good set. \end{proof}

\medskip

\begin{proof}[Proof of Proposition~\ref{cofrigid}]
Let $U = \{ u_1, \ldots, u_n \}  \subset X$. 
For any distinct $i,j\in \{1,\ldots,n\}$, the set $\Gamma^+(u_i) \setminus \Gamma^+(u_j) = \{ v \in X: u_i \arc v \arc u_j \}$ is infinite. 
Hence by Ramsey's Theorem, there is $U_{ij} \subseteq \Gamma^+(u_i) \setminus (\Gamma^+(u_j)\cup\{u_j\})$ with $|U_{ij}| = \aleph_0$, such that $U_{ij}$ is totally ordered by $\arc$. 
Note that the sets $U_{ij}, U_{ji}$ both separate $u_i, u_j$ (since $u_i \arc U_{ij} \arc u_j$, and $u_j \arc U_{ji} \arc u_i$). We may choose the $U_{ij}$ so that if $(i,j)\neq (k,l)$ then $U_{ij}\cap U_{kl}=\emptyset$.

\medskip

{\em Claim 1.}
Let $N$ be any positive integer. Then there are finite subsets $V_{ij}$ of $U_{ij}$ (for all distinct integers $i,j$ with $1\leq i,j\leq n$) of size  $N$ such that the following holds, where $T'$ is the induced subtournament of $T$ with vertex set
$ U \cup \bigcup_{i\neq j} V_{ij}$: for any distinct $i,j\in \{1,\ldots,n\}$, and  for each $x,y \in U_{ij}$ and $v \in T' \setminus U_{ij}$, $x \arc v$ if and only if $y \arc v$.

\medskip

{\em Proof of Claim 1.}
This is an immediate  application of Lemma~\ref{ramsey2}.

\medskip

 Provided we initially choose $N$ large enough, we may cut the $V_{ij}$ down further, and so
 suppose that each set $V_{ij}$ has size exactly $2^r$ for some $r\geq 2$, and that distinct sets $V_{ij}$ and $V_{kl}$ have distinct sizes. Observe (for use in Theorem~\ref{stronger}) that $T'$ has $n+\Sigma_{i=2}^{m+1} 2^i$ vertices where
$m=2{n\choose 2}$, that is, it has  $n+2^{n^2-n+2}-2$ vertices.
We claim that $T'$ is rigid, which suffices to prove the lemma. Let $V$ denote the vertex set of $T'$ (a union of $U$ and the sets $V_{ij}$).

The sets $V_{ij}$ are clearly all good, though possibly not maximal good. Hence, by Lemma~\ref{maxgood}, if $B \cap V_{ij} \ne \emptyset$ and $B$ is maximal good, then $V_{ij} \subseteq B$. 

The idea of the proof is as follows. 
First observe that automorphisms of the subtournament $(V,\to)$ of $T$ preserve the family of maximal good sets. 
We aim to show that by our construction of $V$, all non-singleton maximal good sets in $V$ have different sizes, so in fact each is fixed setwise, and hence pointwise, by any automorphism. 
We then show that if some automorphism $\alpha$ of $(V,\to)$ fixes pointwise all non-singleton maximal good subsets of $V$, then $\alpha$ fixes $V$ pointwise.

\medskip

{\em Claim 2.}
If $A$ is a good subset of $V$, then $|A \cap U| \le 1$.

\medskip

{\em Proof of Claim 2.}
Suppose $u_1, u_2 \in A \cap U$, with $u_1 \ne u_2$. 
We have $u_1 \arc V_{12} \arc u_2$. Since $A$ is good, we must have $V_{12} \subset A$: otherwise any $y \in V_{12} \setminus A$ separates $u_1, u_2$, contradicting the fact that $A$ is nice. 
Similarly, we have $u_2 \arc V_{21} \arc u_1$, and we must have $V_{21} \subset A$.
But then we have $\{u_1, u_2\} \cup V_{12} \cup V_{21} \subseteq A$, and $u_1 \arc V_{12} \arc u_2 \arc V_{21} \arc u_1$. But then  $A$ is not totally ordered by $\arc$, which contradicts the fact that $A$ is good.

\medskip

Thus, maximal good sets are unions of sets $V_{ij}$ with at most one element of $U$ added (this includes the case of a singleton point of $U$).
Then by our choice of the sizes of the $V_{ij}$ in the construction, any two non-singleton maximal good sets have different sizes. 
(For let the $V_{ij}$ have sizes $n_1, \ldots, n_t$, say. These were chosen as distinct powers of 2, and  so  all numbers of the form $n_{i_1} + \ldots + n_{i_s}$ or $n_{i_1} + \ldots + n_{i_s} +1$ are distinct.)
Hence any automorphism of $V$ fixes each non-singleton maximal good set setwise, and hence also pointwise since each is totally ordered and so rigid. Thus any automorphism fixes all elements of $V \setminus U$ pointwise, and so also fixes $U$ pointwise; indeed, for each pair of elements of $U$ there is some $Z \subset V \setminus U$ separating the pair, and so no automorphism can move points of $U$. 
\end{proof}

\begin{cor} \label{coro} Let $T$ be an infinite tournament with 2-homogeneous automorphism group. Then $T$ is cofinally rigid.
\end{cor}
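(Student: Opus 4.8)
The plan is to deduce Corollary~\ref{coro} from Proposition~\ref{cofrigid} by checking that a 2-homogeneous automorphism group of an infinite tournament $T=(X,\arc)$ forces the infinite-symmetric-difference hypothesis of that proposition. So let $G=\Aut(T)$ be 2-homogeneous, and suppose for a contradiction that there are distinct $x,y\in X$ with, say, $\Gamma^+(x)\setminus\Gamma^+(y)$ finite. First I would use 2-homogeneity to turn this into a \emph{uniform} bound: $G$ is transitive on the unordered pairs of $X$, and in fact $G_{\{x,y\}}$ might swap $x$ and $y$ or not, but in either case, applying elements of $G$ we get that for \emph{every} pair of distinct vertices $u,v$, at least one of $\Gamma^+(u)\setminus\Gamma^+(v)$, $\Gamma^+(v)\setminus\Gamma^+(u)$ is finite, and since $\arc$ is a tournament these two sets are exactly $\{w:u\arc w\arc v\}$ and $\{w:v\arc w\arc u\}$ respectively. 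The key point is that by transitivity there is a single integer $k$ bounding the smaller of these two ``separating'' sets over all pairs; more precisely, fix the orbit of $G$ on ordered pairs and note that the function $(u,v)\mapsto |\{w:u\arc w\arc v\}|$ is $G$-invariant, so takes a constant value (possibly $\infty$) on each of the two ordered-pair orbits, and by our assumption the minimum of the two constants is some finite $k$.

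Next I would derive a contradiction from the existence of such a finite $k$, by a counting argument of the same flavour as the Claim in the proof of Lemma~\ref{graph}. Pick $x\in X$; by 2-homogeneity $\Gamma^+(x)$ and $\Gamma^-(x):=X\setminus(\Gamma^+(x)\cup\{x\})$ must both be infinite (if $\Gamma^+(x)$ were finite then $G_x$ could not move its complement into it, but actually infiniteness of $X$ together with homogeneity on pairs already forces both out-degree and in-degree infinite, since if all out-degrees were finite one checks directly using the pair-orbit structure that one reaches a contradiction with $X$ infinite). Now the set $\{w: x\arc w\arc v\}$ for $v\in\Gamma^-(x)$ has size at most $k$, while for $v\in\Gamma^+(x)$ the set $\{w:v\arc w\arc x\}$ has size at most $k$: one of these two bounds holds uniformly, WLOG the first. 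Then for each of $k+1$ chosen vertices $v_1,\dots,v_{k+1}\in\Gamma^-(x)$, the set $\Gamma^+(x)\cap\Gamma^-(v_i)$ is cofinite in $\Gamma^+(x)$ (its complement in $\Gamma^+(x)$ is $\{w:x\arc w\arc v_i\}$, of size $\le k$), so their intersection is non-empty; a vertex $w$ in it satisfies $x\arc w$ and $v_i\arc w$ for all $i$, so $v_1,\dots,v_{k+1}\in\{u: w\arc\text{-path issues}\}$— more carefully, $w\arc$ relates to the $v_i$ as $v_i\arc w$, i.e. $v_1,\dots,v_{k+1}\in\Gamma^-(w)$ while they all lie in a set witnessing that $\{u:x\arc u\arc w\}$ or $\{u:w\arc u\arc x\}$ is large, contradicting the uniform bound $k$. (The precise wording of which separating set is forced large needs care, but one of the two derivations goes through and the other is symmetric.)

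Once the infinite-symmetric-difference hypothesis is established, Proposition~\ref{cofrigid} applies directly: $T$ is cofinally rigid, which is exactly the conclusion of Corollary~\ref{coro}. I would then also note, as the paper does in the discussion preceding Proposition~\ref{not2trans}, that a 2-homogeneous but not 2-transitive group $G$ on $X$ is the automorphism group of (indeed preserves) such a tournament $T$ — the two orbits of $G$ on ordered pairs are the two arc directions — so Corollary~\ref{coro} combined with Lemma~\ref{locallyrigid} will give Proposition~\ref{not2trans} and hence case (2) of the main theorem.

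The main obstacle I expect is bookkeeping about \emph{which} of the two separating sets is the finite one, since this can a priori depend on the orientation: the clean statement ``$\Gamma^+(x)\triangle\Gamma^+(y)$ finite for some pair'' must be pushed, via the two ordered-pair orbits, to a statement uniform enough to run the pigeonhole count, and one has to handle the case where it is the ``in'' direction that is bounded by replacing $T$ with its converse tournament $\overline{T}$ (which has the same 2-homogeneous automorphism group and the same rigidity properties). This is the only genuinely delicate point; the counting contradiction itself is a routine adaptation of the Claim in Lemma~\ref{graph}, and the infinitude of out- and in-neighbourhoods under 2-homogeneity is standard.
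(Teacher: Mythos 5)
There is a genuine gap: the statement you try to prove by contradiction --- that 2-homogeneity of $\Aut(T)$ forces $\Gamma^+(x)\setminus\Gamma^+(y)$ and $\Gamma^+(y)\setminus\Gamma^+(x)$ to be infinite for \emph{all} distinct $x,y$ --- is false. Take $T=({\mathbb Q},<)$ viewed as a tournament ($x\arc y$ iff $x<y$): its automorphism group $\Aut({\mathbb Q},<)$ is transitive on unordered pairs, yet $\Gamma^+(y)\setminus\Gamma^+(x)=\emptyset$ whenever $x\arc y$. So no uniform counting argument can yield the contradiction you are after; the finite-separating-set situation genuinely occurs, precisely when $T$ is a transitive tournament. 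This is why the paper first reduces to the case that $T$ is \emph{not} totally ordered by $\arc$ (the totally ordered case being cofinally rigid directly, since finite chains are rigid), and then derives the contradiction in the Claim not by a pigeonhole count but structurally: the $G$-invariant relation ``$x<y$ iff $\Gamma^+(x)\setminus\Gamma^+(y)$ is infinite'' would, by 2-homogeneity, be a $G$-invariant linear order agreeing with $\arc$ or its converse, forcing $T$ to be a total order after all. (The paper also needs a first step you omit, namely that $|\Gamma^+(x)\triangle\Gamma^+(y)|$ is infinite, obtained from a Ramsey-extracted infinite chain together with constancy of this cardinal on the pair-orbit.) Your plan, which never separates off the total-order case, cannot be repaired into a proof of the (false) auxiliary claim.

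Two further points. First, even granting your hypotheses, the pigeonhole step does not deliver a contradiction: with $v_1,\ldots,v_{k+1}\arc x$ and a common vertex $w$ with $x\arc w$ and $v_i\arc w$ for all $i$, the $v_i$ are common in-neighbours of $x$ and $w$, not separators of them, so no separating set of size greater than $k$ has been exhibited (this is exactly where the analogy with the Claim in Lemma~\ref{graph} breaks down: there, $z_i\sim y$ and $z_i\not\sim x$ automatically separate, whereas here separation requires opposite orientations). Second, passing to the converse tournament does not interchange the two separating sets of an arc ($\{w: x\arc w\arc y\}$ and $\{w: y\arc w\arc x\}$ are each sent to themselves), so that device does not handle the case distinction you flag as the delicate point.
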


\begin{proof} By Ramsey's Theorem, there is a subtournament of $T$ of the form $\{ x_i: i \in \N \}$ with $x_i \arc x_j$ if and only if $i < j$ (or possibly with all arcs reversed). 
Clearly, if $i<j$, then $|\Gamma^+(x_j)\triangle \Gamma^+(x_i)|\geq j-i-1$. 
By 2-homogeneity of $G$ (and therefore $\Aut(T)$), there is $d \in \N \cup \{ \aleph_0 \}$ such that if $x \ne y$ then $|\Gamma^+(x) \triangle \Gamma^+(y)| = d$.
Hence, $d \ge n$ for each $n \in \N$, so $d = \aleph_0$.

We may suppose that $T$ is not totally ordered by $\arc$, since finite total orders are rigid. By Proposition~\ref{cofrigid}, the proof of the corollary now reduces to the following claim.

\medskip

{\em Claim.}
For all distinct $x,y \in X$, the sets $\Gamma^+(x) \setminus \Gamma^+(y)$ and  $\Gamma^+(y) \setminus \Gamma^+(x)$ are both infinite. 

\medskip

{\em Proof of Claim.}
Suppose that for some $u,v \in X$ with $u \neq v$, the set $\Gamma^+(u) \setminus \Gamma^+(v)$ is infinite, but $\Gamma^+(v) \setminus \Gamma^+(u)$ is finite. 
Now, using 2-homogeneity, define an order relation $<$ on $X$, such that  $x < y$ if and only if $\Gamma^+(x) \setminus \Gamma^+(y)$ is infinite. 
This is a $G$-invariant partial order on $X$, containing comparable pairs. 
By 2-homogeneity, it follows that $<$ is a total order, and it or its reverse agrees with $\arc$. This contradicts the above assumption.
 \end{proof}

\medskip

\begin{proof}[Proof of Proposition~\ref{not2trans}] As noted above, there is a $G$-invariant tournament $T$ with vertex set $X$, whose arc set is a $G$-orbit on $X^{[2]}$. The proposition now follows immediately from 2-homogeneity and Corollary~\ref{coro}. 
\end{proof}

\medskip

\begin{proof}[Proof of Proposition~\ref{mainthm}] This is immediate from Lemma~\ref{locallyrigid} and Propositions~\ref{not2hom} and \ref{not2trans}. 
\end{proof}

\section{Further remarks}

The proof of Theorem~\ref{mainthm} yields that there is a function $f:{\mathbb N}\to {\mathbb N}$ such that for any $l\in {\mathbb N}$, if $H\leq G$ are closed permutation groups on an infinite set $X$ with $G$ primitive but not 2-transitive, and $G$ and $H$ have the same orbits on $X^{[n]}$ for all $n\leq f(l)$, then $G$ and $H$ have the same orbits on $X^m$ for all $m\leq l$. An upper bound for $f$ is given by the cardinality of $V$ in terms of $|U|$ in the definition in the Introduction of a  group $G$ acting locally rigidly. By the proofs of Propositions~\ref{not2hom} and \ref{not2trans}, we 
obtain the following slight strengthening of Theorem~\ref{mainthm}, probably far from best possible. Observe that with $m$ and $k$ as in Remark~\ref{careful},
$\frac{1}{2}(2n+k(2m+k+5))\leq n+2^{n^2-n+2}-2$ for all $n>1$, so the bound in the proof of
Proposition~\ref{not2trans} dominates.

\begin{thm}\label{stronger}
Let $G,H$ be closed permutation groups on the infinite set $X$, with $G$ primitive but not 2-transitive on $X$, and with $H\leq G$. Let $n\in {\mathbb N}$, and suppose that $G$ and $H$ have the same orbits on the set $X^{[l]}$ for each $l\leq n+2^{n^2-n+2}-2$. Then $G$ and $H$ have the same orbits on $X^m$ for each $m\leq n$.
\end{thm}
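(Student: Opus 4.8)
The plan is to derive Theorem~\ref{stronger} by making the local-rigidity arguments of Propositions~\ref{not2hom} and~\ref{not2trans} quantitative and then feeding the resulting bounds into the proof of Lemma~\ref{locallyrigid}. First I would revisit the proof of Lemma~\ref{locallyrigid}: given $\overline{u}_1,\overline{u}_2\in X^m$ in the same $G$-orbit, enumerating $U_1,U_2$ of size $m\le n$, the argument produces a finite $V_1\supseteq U_1$ with $G_{\{V_1\}}\le G_{(U_1)}$, and then only uses orbit-equivalence of $G$ and $H$ \emph{on subsets of size $|V_1|$}. So the statement we need is: whenever $G$ acts locally rigidly via a ``rigidifying map'' $U\mapsto V$ with $|V|\le N(|U|)$ for some explicit function $N$, then equality of orbits of $G,H$ on $X^{[l]}$ for all $l\le N(n)$ forces equality of orbits on $X^m$ for all $m\le n$. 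This is just a matter of tracking which cardinalities of subsets are actually invoked, and it is routine.

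Next I would pin down $N(n)$ in each of the two cases. In case~(1), $G$ primitive but not $2$-homogeneous, Proposition~\ref{not2hom} reduces to Lemma~\ref{orderedgraph}, and Remark~\ref{careful} already records that from $|U|=n$ one can take $|V|\le \frac12\bigl(2n+k(2m+k+5)\bigr)$ where $m=\binom n2$ and $k\le \frac{(m+1)(m+2)}2-1$; this is $O(n^8)$. In case~(2), $G$ $2$-homogeneous but not $2$-transitive, Proposition~\ref{not2trans} goes through Corollary~\ref{coro} and Proposition~\ref{cofrigid}, and the proof of Proposition~\ref{cofrigid} explicitly constructs a rigid subtournament $T'$ on $n+\sum_{i=2}^{m+1}2^i = n+2^{m+2}-2$ vertices with $m=2\binom n2=n^2-n$, i.e.\ on $n+2^{n^2-n+2}-2$ vertices. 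The remark preceding Theorem~\ref{stronger} observes that $\frac12(2n+k(2m+k+5))\le n+2^{n^2-n+2}-2$ for $n>1$, so the tournament bound dominates; I would state this comparison (a one-line check, exponential versus polynomial) and conclude that $N(n)=n+2^{n^2-n+2}-2$ works uniformly in both cases (the case $n\le 1$ being trivial).

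Then the proof of Theorem~\ref{stronger} is assembled: $G$ is primitive but not $2$-transitive, so by Proposition~\ref{not2hom} or~\ref{not2trans} (according to whether $G$ is $2$-homogeneous) it acts locally rigidly, and by the quantitative versions just described the rigidifying set $V$ for a given $U$ with $|U|=m\le n$ can be taken of size at most $n+2^{n^2-n+2}-2$. Running the argument of Lemma~\ref{locallyrigid} with these $V$, the only instance of orbit-equivalence used is on subsets of size $|V_1|\le n+2^{n^2-n+2}-2$; since by hypothesis $G$ and $H$ have the same orbits on $X^{[l]}$ for every such $l$, we get $h\in H$ with $V_1^h=V_2$, whence $gh^{-1}\in G_{\{V_1\}}\le G_{(U_1)}$ and $\overline u_1^{\,h}=\overline u_2$. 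Thus $G$ and $H$ have the same orbits on $X^m$ for each $m\le n$.

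The only real subtlety — the ``main obstacle'' such as it is — is bookkeeping: one must check that, in the constructions of $V$ in Lemma~\ref{orderedgraph} and Proposition~\ref{cofrigid}, the \emph{only} appeal to orbit-equivalence of $G$ with $H$ happens downstream in Lemma~\ref{locallyrigid}, applied to a single set of the stated size, and that the size bounds hold for the relevant $m\le n$ rather than only for $m=n$ (monotonicity of the bounds in $|U|$ makes $|U|=n$ the worst case, so this is fine). There is also the trivial edge case $n\le 1$, where $X^{[0]},X^{[1]}$-orbits already determine $X^0,X^1$-orbits, so the theorem holds vacuously or immediately; I would dispose of it in a sentence. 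No new ideas beyond those of Sections~2 and~3 are required.
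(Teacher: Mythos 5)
Your proposal is correct and follows essentially the same route as the paper: the paper also obtains Theorem~\ref{stronger} by tracking the size of the rigidifying set $V$ through Remark~\ref{careful} (the $O(n^8)$ bound in the non-2-homogeneous case) and the explicit vertex count $n+2^{n^2-n+2}-2$ for the rigid subtournament $T'$ in Proposition~\ref{cofrigid}, noting the exponential bound dominates, and then rerunning Lemma~\ref{locallyrigid}, where orbit-equivalence is invoked only on sets of size $|V_1|$.
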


Theorem~\ref{mainthm} requires the assumption of primitivity. For example, 
$\Aut({\mathbb Q},<) {\rm Wr} C_2$ is orbit equivalent to $\Sym({\mathbb Q}) {\rm Wr} C_2$ (in the natural imprimitive action). However, a proof of Conjecture~\ref{mainconj} should yield a lot of information about the imprimitive case.

A proof of Conjecture~\ref{mainconj}, at least if via local rigidity, would appear to require arguments considerably more involved than those of this paper. As an example, suppose that $G$ is 2-primitive (that is, 2-transitive and with  primitive point stabilisers) but not 3-homogeneous on the infinite set $X$. We conjecture that $G$ acts locally rigidly. There is a $G$-invariant 3-hypergraph $\Gamma$ on $X$, and we would like to show that $\Gamma$ (possibly expanded by some other $G$-invariant relations) is locally rigid. Given $x\in X$, there is an induced  graph 
$\Gamma_x$ on $X\setminus \{x\}$ on which $G_x$ acts primitively. However, it is not clear that local rigidity of $\Gamma_x$ transfers to local rigidity of $\Gamma$, or that a straightforward induction on the degree of transitivity of $G$ can be made to work.
There may also be an approach to local rigidity of hypergraphs using  \cite[Lemma 2.5]{macpherson1} and related results.

We cannot even prove the conjecture under the assumptions that $X$ is countable and $G$ is locally compact (that is, there is some finite $F\subset X$ such that all orbits of $G_{(F)}$ on $X$ are finite). Even the case when $G$ is countable is open. A first class to consider would be that of primitive groups with finite point stabiliser, for which Smith \cite{smith} gives a useful-looking  version of the O'Nan-Scott Theorem.

However, as evidence for the conjecture, we observe that an obvious place  to look for a counterexample, suggested by the family of closed supergroups of $\Aut({\mathbb Q},<)$ listed in Conjecture~\ref{mainconj}, fails. Indeed, let $(T,<)$ be any of the  countable 2-homogeneous trees (that is, semilinear orders) classified by Droste in \cite{droste}. There is a family of interesting primitive closed permutation groups associated with $\Aut(T,<)$, namely the primitive Jordan permutation groups with primitive Jordan sets classified in \cite{an}: we have in mind $\Aut(T,<)$, the  automorphism group of the ternary general betweenness relation on $T$ induced from $<$, the automorphism group of the corresponding countable $C$-structure, a structure whose elements are a dense set of maximal chains in $(T,<)$, and the automorphism group of the corresponding $D$-relation  (a quaternary relation on the set of `directions' of the betweenness relation). It can be checked that each of these groups acts locally rigidly. We omit the details.

In \cite{volta} a permutation group $G$ on $X$ is defined to be {\em orbit-closed} if there is no $H\leq \Sym(X)$ which properly contains $G$ and is orbit-equivalent to $G$. Such $G$ will be a closed permutation group, and Conjecture~\ref{mainconj} asserts that if $X$ is countably infinite then the only primitive closed permutation groups which are not orbit-closed are the proper subgroups of $\Sym(X)$ listed in that conjecture.
In \cite{volta} the authors  define $G\leq \Sym(X)$ to be a {\em relation group}  if there is a collection $R$ of finite subsets of $X$ such that 
$$G=\{g\in \Sym(X): \forall a \in {\cal P}(X)(a \in R \leftrightarrow a^g \in R)\}.$$
Clearly any relation group is orbit-closed. Also,  by \cite[Corollary 4.3]{volta}, any {\em finite} primitive orbit-closed group is a relation group. We do not know  whether this holds without finiteness, and in particular cannot answer  the following question, to which Siemons drew our attention.

\begin{question}\label{siemons}
Is $\Aut({\mathbb Q},<)$ the only primitive but not 2-transitive closed permutation group of countable degree which is not a relation group?
\end{question}

\noindent
As a small example, let $\Gamma_3$ be the universal homogeneous 2-edge-coloured graph with edges coloured randomly red or green; that is, the unique countably infinite homogeneous 2-edge-coloured graph such that for any three finite disjoint sets $U,V,W$ of vertices, there is a vertex $x$ not adjacent to any vertex in $U$, adjacent by a red edge to each element of $V$ and by a green edge to each element of $W$. At first sight, $G=\Aut(\Gamma_3)$ is not a relation group, but in fact it is a relation group; for  we may  take $R$ to consist of the 2-sets joined by a red edge and the 3-sets which carry a green triangle.

Our remarks in the Introduction suggest a further question. Again, for convenience, we shall consider actions on a countably infinite set $X$. A subset $Y$ of $X$ is a {\em moiety} of $X$ if $|Y|=|X\setminus Y|$.

\begin{question}
Which primitive closed permutation groups $G$ on a countably infinite set $X$ have a regular orbit on moieties?
\end{question}

To say that $G$ has a regular orbit on moieties of $X$ is the same as to say, in the language of \cite{laflamme}, that any first order structure $M$ on $X$ with $G=\Aut(M)$ has {\em distinguishing number 2}. Some results on this are obtained in \cite{laflamme}. For example, if $M$ is a homogeneous structure such that the collection of finite structures which embed in it is a `free amalgamation class', and $\Aut(M)$ is primitive but for some $k$ is not $k$-transitive, then $\Aut(M)$ has a regular orbit on moieties. In particular, this holds for the random graph, as follows already from \cite[Theorem 3.1]{henson}. 
On the other hand, as noted in \cite{laflamme} it is easily seen that $\Aut({\mathbb Q},<)$ has no regular orbit on moieties; for if $A$ is a moiety of ${\mathbb Q}$ whose setwise stabiliser is trivial, then $A$ is dense and codense in ${\mathbb Q}$, but the structure 
$({\mathbb Q},<, P)$, where $P$ is a unary predicate naming a dense codense set,
is homogeneous so admits $2^{\aleph_0}$ automorphisms.

This suggests the following strengthening of orbit-equivalence. Let us say that
permutation groups $G,H$ on the countably infinite set $X$ are {\em strongly orbit-equivalent} if they have the same orbits on the power set ${\cal P}(X)$ of $X$ (not just on {\em finite} subsets of $X$).
The following conjecture is implied by Conjecture~\ref{mainconj}, for it is easily seen that the five closed groups containing $\Aut({\mathbb Q},<)$ all have different orbits on ${\cal P}({\mathbb Q})$. For example, $\Aut({\mathbb Q},<)$ has an orbit consisting of increasing subsets of order type $\omega$ with rational supremum, but this family of sets is not invariant under the automorphism groups of the induced circular order or linear betweenness relation.

\begin{conj} Let $G,H$ be strongly orbit-equivalent closed permutation groups on the countably infinite set $X$. Then $H=G$.
\end{conj}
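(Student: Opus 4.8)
The plan is to follow the proof of Theorem~\ref{mainthm}, but to allow the `rigidifying' sets to be infinite. First I would reduce to the case $H\le G$: the group $\langle G,H\rangle$ has the same orbits on ${\cal P}(X)$ as $G$, since every element of it is a product of elements of $G\cup H$ and each such factor preserves the common orbit partition on ${\cal P}(X)$, so one may replace $G$ by $\langle G,H\rangle$ --- the one caveat being that one must check the closure of $\langle G,H\rangle$ remains strongly orbit-equivalent to $G$, which needs a short separate argument or can be absorbed into the case analysis below. Say that $G\le\Sym(X)$ has property $(\star)$ if for every finite $U\subseteq X$ there is $V$ with $U\subseteq V\subseteq X$, \emph{not necessarily finite}, such that $G_{\{V\}}\le G_{(U)}$. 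The proof of Lemma~\ref{locallyrigid} goes through verbatim with $V$ allowed to be infinite, and yields: if $H\le G$ are closed and strongly orbit-equivalent and $G$ has $(\star)$, then $H=G$ --- one now applies equality of orbits on \emph{all} of ${\cal P}(X)$ to the pair $V_1,V_2:=V_1^g$, rather than equality of orbits on finite sets only. So it suffices to show that the relevant closed groups have $(\star)$, or to treat the exceptions directly.

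Next I would split by whether $G$ is primitive but not $2$-transitive, imprimitive, or primitive and $2$-transitive. The first case is already covered by Theorem~\ref{mainthm}; indeed Propositions~\ref{not2hom} and~\ref{not2trans} give $(\star)$ with $V$ finite, and the strong hypothesis is not used. In the imprimitive case, $H\le G$ preserve a common non-trivial system of blocks; the induced actions of $H$ and $G$ on the block set, and the induced actions of the block-setwise-stabilisers on a single block, are again strongly orbit-equivalent, so one expects to reduce to the primitive case by induction on complexity, the only technical point being that induced actions of closed groups may fail to be closed and so one must pass to closures with care (again bringing in the caveat above). The genuinely new and hardest case is $G$ primitive and $2$-transitive. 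Here I would try to establish $(\star)$ by encoding, for each finite $U$, a sufficiently rich infinite configuration in $X$ built from a $G$-invariant relation --- one whose pointwise stabiliser in $\Sym(X)$ already fixes $U$ --- and running an induction on the degree of transitivity of $G$ by passing to a point stabiliser $G_u$ and its invariant graph or hypergraph on $X\setminus\{u\}$; the freedom to take $V$ infinite is what one hopes makes this work where the finite version discussed in Section~4 is open.

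The one unavoidable failure of $(\star)$ is $G=\Sym(X)$, where $G_{\{V\}}$ induces $\Sym(V)$ and hence fixes no finite $U\subseteq V$ with $|U|\ge2$. For this $G$ one instead uses Cameron's theorem \cite{cam}: the only closed proper subgroups of $\Sym(X)$ orbit-equivalent to it are $\Aut(X,<)$, $\Aut(X,B)$, $\Aut(X,C)$, $\Aut(X,S)$, and, as observed just before the conjecture, these four have orbits on ${\cal P}(X)$ distinct from those of $\Sym(X)$; hence $H=\Sym(X)$. Closed groups assembled from full symmetric groups, such as $\Sym(X)_a$ or $\Sym(Y)\times\Sym(Z)$ with $X=Y\sqcup Z$, are disposed of by restricting to the components and applying Cameron's theorem there, using that transitivity on the moieties of a component (a consequence of strong orbit-equivalence) rules out every proper orbit-equivalent subgroup of the symmetric group on that component.

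The hard part, I expect, is exactly the $2$-transitive primitive case: proving $(\star)$ for every such $G$ other than $\Sym(X)$ (equivalently, I suspect, for every closed $2$-transitive $G$ with $G_{\{V\}}$ a proper subgroup of $\Sym(V)$ for large $V$), and running the induction on transitivity degree uniformly --- through $2$-primitive but not $3$-homogeneous groups, the primitive Jordan groups, groups with finite point stabiliser, and so on. This appears to require ideas going well beyond those of the present paper, and in particular subsumes the local-rigidity questions raised in Section~4.
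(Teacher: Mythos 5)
You have not proved this statement, and in fact the paper does not prove it either: it is stated as a conjecture, explicitly left open (the paper only remarks that it would follow from Conjecture~\ref{mainconj}, which is itself open). Your outline settles exactly the cases that are already available: the primitive not 2-transitive case via Theorem~\ref{mainthm} (correctly noting that equality of orbits on ${\cal P}(X)$ restricts to equality of orbits on $X^{[k]}$), and the case $G=\Sym(X)$ via Cameron's theorem \cite{cam} together with the paper's observation that the four proper closed orbit-equivalent subgroups of $\Sym(X)$ have different orbits on ${\cal P}(X)$. Everything else is a plan, not an argument: for $G$ primitive and 2-transitive but not the full symmetric group you only say you ``would try'' to establish property $(\star)$ by an induction on the degree of transitivity, and you yourself concede this ``appears to require ideas going well beyond those of the present paper''. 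That is precisely the open heart of the problem (the paper records that even the 2-primitive, not 3-homogeneous local-rigidity question is unresolved, and that the conjecture is open even for locally compact or countable $G$), so the central case of your proof is missing rather than merely technical.

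Two of your reductions also have genuine holes. First, the reduction to $H\leq G$: you replace $G$ by the closure of $\langle G,H\rangle$, but strong orbit-equivalence does not pass to closures. Stoller's example \cite{stoller} is exactly a non-closed proper subgroup of $\Sym({\mathbb N})$ with the same orbits on ${\cal P}({\mathbb N})$ as $\Sym({\mathbb N})$; dually, a group and its closure can have different orbits on moieties even though they have the same orbits on finite sets, so the caveat you mention is not a formality and is not discharged. Second, the imprimitive case: the induced action of a closed group on a block system, and of a block stabiliser on a block, need not be closed, block systems need not be finite or discrete in any useful sense, and ``induction on complexity'' has no obvious well-founded measure here; the paper's example $\Aut({\mathbb Q},<)\,{\rm Wr}\,C_2$ versus $\Sym({\mathbb Q})\,{\rm Wr}\,C_2$ shows that orbit information on the quotient and on blocks does not straightforwardly control the group. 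Your modification of Lemma~\ref{locallyrigid} allowing infinite rigidifying sets $V$ is a sound observation and a reasonable strategy, but as it stands the proposal reduces the conjecture to harder open problems rather than proving it.
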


Again, the assumption that the groups are closed is necessary. Stoller (\cite{stoller}, see also \cite{neumann}) gives an example of a proper subgroup $H$ of $G=\Sym({\mathbb N})$ which is strongly orbit-equivalent to $G$; namely, let $H$ consist of those permutations $g$ of ${\mathbb N}$ such that there are  two partitions, dependent on $g$,  of ${\mathbb N}$ into finitely many sets $A_1,\ldots,A_k$ and $B_1,\ldots,B_k$ (so ${\mathbb N}=A_1\cup\ldots\cup A_k=B_1\cup\ldots \cup B_k$, each partitions) such that for each $i=1,\ldots,k$, the element $g$ induces an order isomorphism $(A_i,<)\to (B_i,<)$.

Finally, we mention a conjectural strengthening of Lemmas~\ref{prop6.1} and \ref{orderedgraph}. It is  a special case of a much stronger conjecture in \cite{sethomg}.

\begin{conj}
Let $\Gamma$ be an infinite graph such that for any distinct vertices $x,y$ the set $\Gamma(x)\triangle \Gamma(y)$ is infinite. Then $\Gamma$ is locally rigid.
\end{conj}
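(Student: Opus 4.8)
The plan is to try to emulate the proof of Lemma~\ref{orderedgraph}, dropping the hypothesis that the ``infinite separating sets'' can always be split into an $\supset_f$-half and a $\subset_f$-half, and instead working with a genuinely symmetric notion of separation. Given a finite $U=\{u_1,\ldots,u_n\}\subseteq X$, for each pair $u_i,u_j$ we have an infinite separating set $Q_{ij}\subseteq X\setminus U$ (here ``separating'' means $w\in\Gamma(u_i)\triangle\Gamma(u_j)$), simply because $\Gamma(u_i)\triangle\Gamma(u_j)$ is infinite; no order is needed for this. We then apply Lemma~\ref{ramsey2} with the language $L=\{R\}$ (no order symbol now) to extract finite mutually indiscernible subsets $P_{ij}\subseteq Q_{ij}$, each carrying a complete or null induced subgraph and attached uniformly to the rest of $W:=U\cup\bigcup P_{ij}$. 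As in Lemma~\ref{orderedgraph} we introduce the equivalence relation $\approx_W$, discard redundant $P_{ij}$ so distinct surviving ones lie in distinct $\approx_W$-classes, and tune the sizes so that all $\approx_W$-classes of size $\ge 2$ have distinct sizes. Any automorphism of $(W,R)$ then fixes each such class setwise, hence pointwise would follow were the classes rigid — but a complete or null class of size $\ge 2$ is the opposite of rigid. So the only obstruction, exactly as before, is a class $C$ consisting of some $P_{ij}$ together with one point $u\in U$: an automorphism could swap $u$ with a point of $P_{ij}$.

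To kill such an obstruction we must, for each bad pair $(u,c)$ with $c\approx_W u$, add a further huge set $S_{cu}\subseteq X\setminus W$ that separates $u$ from $c$ in such a way that, after the dust settles, $u$ ends up in a singleton $\approx_V$-class. The recipe of Lemma~\ref{orderedgraph} chose $S_{cu}$ using the order $<$ (to decide whether to put $S_{cu}\subseteq\Gamma(c)\setminus\Gamma(u)$ or $\subseteq\Gamma(u)\setminus\Gamma(c)$, and to pick the ``correct'' half so the huge class would not re-absorb $u$). Without an order we have lost precisely that control, and this is where the real work lies: we need to show that one can always choose a \emph{huge} set inside $\Gamma(c)\triangle\Gamma(u)$ lying entirely in $\Gamma(c)\setminus\Gamma(u)$ or entirely in $\Gamma(u)\setminus\Gamma(c)$ — automatic, since $\Gamma(c)\triangle\Gamma(u)=(\Gamma(c)\setminus\Gamma(u))\sqcup(\Gamma(u)\setminus\Gamma(c))$ is infinite so one of the two sides is infinite — and, crucially, that after passing to mutual indiscernibility over $W$ and arranging distinct huge $\approx_V$-classes to have distinct sizes, the point $u$ is separated from every other point of its would-be huge class $S$ by the single vertex $c\in W$ (because $c$ is joined uniformly to all of $S_{cu}$ but oppositely to $u$), hence $u\notin S$. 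That last argument, which occupies the ``This proves the claim'' paragraph and the two ensuing Claims in Lemma~\ref{orderedgraph}, goes through verbatim once the order symbol is deleted, \emph{except} for one point.

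The point where I expect the genuine obstacle is the analogue of the final Claim of Lemma~\ref{orderedgraph}: there, after showing $S_j=S_{cu}\cup\{c\}$ and that an automorphism $g$ can carry $v\in S_{cu}$ to $c$, one uses the order to conclude that no vertex $t\in V\setminus S_j$ can be related to $c$ differently from how it is related to the rest of $S_j$ (this used mutual indiscernibility in the \emph{ordered} language). With only $R$, the vertex $c$ and the vertices of $S_{cu}$ are related identically to everything in $W\setminus\{u\}$ by construction, but we must also control the $R$-relations between $S_j$ and the \emph{other} huge sets $S_{c'u'}$ — and mutual indiscernibility of the family $\{S_{cu}\}$ over $W$ gives exactly that each $S_{cu}$ is attached uniformly to each other $S_{c'u'}$, so $c$ (being attached to $S_{c'u'}$ the same way $S_{cu}$ is, as $c\in W$) is indistinguishable from points of $S_{cu}$ by anything outside $S_j$. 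Hence the inverse of $g$ on $S_j$, extended by the identity, is again an automorphism of $(V,R)$, and the proof concludes as before: we build $h\in\Aut(V,R)_{(U)}$ as a product of such local inverses, one per huge class, so that $gh$ fixes $W$ setwise, then fixes $U$ setwise (the non-singleton $\approx_W$-classes having distinct sizes), then fixes $U$ pointwise (distinct points of $U$ being separated by distinct-sized classes), whence $g$ fixes $U$ pointwise. The subtlety I am least certain survives cleanly is the bookkeeping when several bad pairs share a vertex or when a huge set $S_{cu}$ happens itself to be complete-or-null in a way that interacts with another huge set; a more careful version of the Pigeonhole reductions, possibly iterated, should handle it, but making that rigorous without the crutch of the order is precisely the hard part, and is presumably why the authors prove only the ordered version here and leave this as a conjecture.
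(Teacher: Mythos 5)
You should first note that this statement is not proved in the paper at all: it is left as an open conjecture (a special case of a stronger conjecture in \cite{sethomg}), and the paper deliberately proves only the weaker Lemma~\ref{prop6.1} (both differences $\Gamma(x)\setminus\Gamma(y)$, $\Gamma(y)\setminus\Gamma(x)$ infinite) and Lemma~\ref{orderedgraph} (local rigidity of the \emph{expansion} $(X,R,<)$ rather than of the graph alone). So your proposal, which itself concedes uncertainty at the end, would need to be a complete new argument; it is not, and the gap is exactly where the authors needed the order.

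Concretely, you have mislocated the obstruction. It is not mainly the final Claim about building $h$; it is the construction of the sets $S_{cu}$ and the assertion that no huge $\approx_V$-class meets $U$. In the paper, when the class $C$ is null the set $S_{cu}$ must be taken inside $\Gamma(c)\setminus\Gamma(u)$, i.e.\ on the side adjacent to $c$ and non-adjacent to $u$, because it is the vertex $c$ itself that then separates $u$ from every element of the huge class containing $S_{cu}$, which is what keeps $u$ out of that class. Under the conjecture's hypothesis this side can be finite (the case $u<c$ in the notation of Lemma~\ref{orderedgraph}), and your remark that ``one of the two sides is infinite'' does not rescue this: choosing $S_{cu}\subseteq\Gamma(u)\setminus\Gamma(c)$ gives vertices adjacent to $u$ and not to $c$, so $c$ no longer separates $u$ from $S_{cu}$, and nothing in the construction prevents $u$ from being $\approx_V$-equivalent to the whole new huge class; a transposition of $u$ with a vertex of $S_{cu}$ is then an automorphism of $(V,R)$ moving $u$, and the scheme collapses. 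The paper handles precisely this configuration by adding \emph{no} set $S_{cu}$ and instead observing that $u$ is then the $<$-least (or $<$-greatest) element of its class, hence fixed by any $<$-preserving automorphism --- protection that evaporates the moment the order symbol is deleted. Trying to separate $u$ from such an $S_{cu}$ by yet further auxiliary vertices leads to the regress that makes the graph-only statement genuinely hard; this is why the authors prove local rigidity of $(X,R,<)$ (which suffices for Theorem~\ref{mainthm}, since $<$ is $G$-invariant) and leave the present statement as a conjecture. Your argument, as written, does not close that gap.
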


{\em Acknowledgement.} We thank J. Siemons for drawing attention to \cite{volta} and \cite{smith} and to Question~\ref{siemons}.


\begin{thebibliography}{99}
\bibitem{an} Adeleke, S., Neumann, P.M.: Primitive permutation groups with primitive Jordan sets. J. London Math. Soc. (2) 53, 209--229 (1996).
\bibitem{cam} Cameron, P.J.: Transitivity of permutation groups on unordered sets. Math. Zeit. 148, 127--139 (1976).
\bibitem{cns} Cameron, P.J.,  Neumann, P.M.,  Saxl, J.:  On groups with no regular orbits on the set of subsets. Arch. Math. 43, 295--296 (1984).

\bibitem{volta} Dalla Volta, F.,  Siemons, J.: Orbit equivalence and permutation groups defined by unordered relations. J. Algebr. Comb., to appear.
\bibitem{droste} Droste, M.: Structure of partially ordered sets with transitive automorphism groups. Mem. Amer. Math. Soc. vol. 334, Providence RI (1985).

\bibitem{sethomg} 
Droste, M., Giraudet, M., Macpherson, H.D., Sauer, N.:
Set-homogeneous graphs.
J.~Combin.~Theory Ser.~B 62, 63--95 (1994).

\bibitem{en} Enomoto, H.: Combinatorially homogeneous graphs. J. Comb. Theory Ser. B 30, 215--223 (1981).

\bibitem{fra} Fra\"iss\'e, R.: Theory of relations. Revised edition, Studia Logic Found. Math. vol. 145, North-Holland, Amsterdam,  with an appendix by N. Sauer (2000).
\bibitem{gmpr} Gray, R.,  Macpherson, H.D.,  Praeger, C.E.,  Royle, G.F.: Set-homogeneous directed graphs. J. Comb. Theory Ser. B 102, 474--520 (2012).
\bibitem{henson} Henson, W.: A family of countable graphs. Pacific J. Math. 38, 69--83 (1971).
\bibitem{inglis} Inglis, N.: On orbit equivalent permutation groups. Arch. Math. 43, 297--300  (1984).
\bibitem{lachlan} Lachlan, A.H.: Finite homogeneous simple digraphs. In Logic Colloquium 1981 (ed. J. Stern), Studia Logic Found. Math. vol. 107, North-Holland, New York,  pp. 189--208 (1982).
\bibitem{laflamme} Laflamme, C.,  Nguyen Van Th\'e, L.,  Sauer, N.: Distinguishing number of countable homogeneous relational structures. Electronic J. Comb. 17, no. R20  (2010).
\bibitem{macpherson1} Macpherson, H.D.: The action of an infinite permutation group on the unordered subsets of a set. Proc. London Math. Soc. (3) 46, 471--486  (1983).
\bibitem{macpherson} Macpherson, H.D.: Orbits of infinite permutation groups. Proc. London Math. Soc. (3) 51, 246--284  (1985).
\bibitem{neumann} Neumann, P.M.: Homogeneity of infinite permutation groups. Bull. London Math. Soc. 20, 305--312  (1988).
\bibitem{ronse} Ronse C.: On homogeneous graphs. J. London Math. Soc. (2) 17,  375--379  (1978).
\bibitem{seress} Seress, A.: Primitive groups with no regular orbits on the set of subsets. Bull. London Math. Soc. 29, 697--704  (1997).
\bibitem{seressyang} Seress, A., Yang, K.: On orbit-equivalent two-step imprimitive permutation groups. Computational group theory and the theory of groups, Contemp. Math 470, Amer. Math. Soc., Providence, RI,  pp. 271--286 (2008).
\bibitem{siemons} Siemons, J.: On partitions and permutation groups on unordered sets. Arch. Math. 38, 391--403  (1982).
\bibitem{siemwag} Siemons, J.,  Wagner, A.: On finite permutation groups with the same orbits on unordered sets. Arch. Math. 45, 492--500  (1985).
\bibitem{smith} Smith, S.M.: A classification of primitive permutation groups with finite stabilizers.  arXiv:1109.5432v1 [math.GR]
\bibitem{stoller} Stoller, G.: Example of a subgroup of $S_{\infty}$ which has a set-transitive property. Bull. Amer. Math. Soc. 69, 220--221  (1963).

\end{thebibliography}
\end{document}